\newtheorem{theorem}{Theorem}[section]
\newtheorem{conjecture}[theorem]{Conjecture}
\newtheorem{lemma}[theorem]{Lemma}
\newtheorem{question}[theorem]{Question}
\newtheorem{proposition}[theorem]{Proposition}
\newtheorem{remark}[theorem]{Remark}
\numberwithin{equation}{section}  
\begin{document}
\title[Scalar curvature integral bound]{Sharp integral bound of scalar curvature on $3$-manifolds}
\author{Ovidiu Munteanu and Jiaping Wang}

\begin{abstract}
It is shown that the integral of the scalar curvature on a geodesic ball of radius $R$ in a three-dimensional complete manifold with nonnegative Ricci curvature is bounded above by $8\pi R$ asymptotically for large $R$ provided that the scalar curvature is bounded between two positive constants.
\end{abstract}

\address{Department of Mathematics, University of Connecticut, Storrs, CT
06268, USA}
\email{ovidiu.munteanu@uconn.edu}
\address{School of Mathematics, University of Minnesota, Minneapolis, MN
55455, USA}
\email{jiaping@math.umn.edu}

\maketitle

\section{Introduction}

The classical Cohn-Vossen theorem states that the inequality $\int_M K\,dA\leq 2\pi \chi (M)$ holds for every complete Riemann 
surface $M$ with finite total curvature, where $K$ and $\chi (M)$ are the Gaussian curvature and the Euler characteristic
number of $M,$ respectively. As a higher dimensional analogue, Yau \cite{Y} has raised the following question.

\begin{question}\label{QY}
For an $n$-dimensional complete manifold $\left( M^{n},g\right) $ with its Ricci curvature $\mathrm{Ric}\geq 0,$ does its scalar
curvature $S$ satisfy

\begin{equation*}
\limsup_{R\rightarrow \infty }\frac{1}{R^{n-2}}\int_{B_{p}(R)}S<\infty?
\end{equation*}
\end{question}
Here and in the following, $B_p(R)$ denotes the geodesic ball of radius $R$ centered at a point $p\in M.$
Note that if $M$ has more than one end, then the Cheeger-Gromoll splitting theorem implies that $M=\mathbb{R}\times N,$ 
where $N$ is compact. In this case, the answer to the question is clearly affirmative. So we may restrict our attention to manifolds with one end. Our main purpose here is to approach the question in the three dimensional case by establishing the following result. 

\begin{theorem}
\label{A}Let $\left( M^{3},g\right) $ be a three-dimensional complete manifold with $\mathrm{Ric}\geq 0.$
If its scalar curvature $S$ is bounded between two positive constants, then
\begin{equation*}
\limsup_{R\rightarrow \infty }\frac{1}{R}\int_{B_{p}(R)}S\leq 8\pi.
\end{equation*}
\end{theorem}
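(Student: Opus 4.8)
The strategy is to reduce the ball integral to an integral over geodesic spheres. By the co--area formula $\int_{B_p(R)}S\,dV=\int_0^R\big(\int_{\partial B_p(r)}S\,dA\big)\,dr$, so after a routine averaging it suffices to bound the total scalar curvature of $\partial B_p(r)$ for large $r$. Two preliminary remarks: in dimension three $\mathrm{Rm}$ is an algebraic function of $\mathrm{Ric}$, so $\mathrm{Ric}\ge 0$ and $S\le b$ give $0\le\mathrm{Ric}\le b\,g$ and a two--sided bound $|\mathrm{Rm}|\le c_0$, hence $M$ has bounded geometry (used for compactness arguments and to control the cut locus); and by Cheeger--Gromoll we may assume $M$ has one end, so that $\partial B_p(r)$ is a connected closed surface for all large $r$ and $\chi(\partial B_p(r))\le 2$.

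On the regular part of $\Sigma_r=\partial B_p(r)$, write $A$ for the shape operator with respect to $\partial r$, $H=\mathrm{tr}\,A$ for the mean curvature, and $K_{\Sigma_r}$ for the intrinsic Gauss curvature. The Gauss equation gives $S=2\,\mathrm{Ric}(\partial r)+2\big(K_{\Sigma_r}-\det A\big)$, and the Bochner/Riccati identity along radial geodesics gives $\partial_r H=-|A|^2-\mathrm{Ric}(\partial r)$, so $H$ is nonincreasing and $H\le 2/r$. Integrating the Gauss equation and using Gauss--Bonnet (the cut--locus corrections entering with the favorable sign), one obtains
\[
\int_{\Sigma_r}S\,dA\ \le\ 4\pi\,\chi(\Sigma_r)+2\int_{\Sigma_r}\!\big(\mathrm{Ric}(\partial r)-\det A\big)\,dA\ \le\ 8\pi+2\int_{\Sigma_r}\!\big(\mathrm{Ric}(\partial r)-\det A\big)\,dA .
\]
Since $\det A=\tfrac12(H^2-|A|^2)$ and $|A|^2=-\partial_r H-\mathrm{Ric}(\partial r)$, integrating this ``defect'' in $r$ converts it into boundary and bulk terms, and one is led to
\[
\int_{B_p(R)}S\,dV\ \le\ 8\pi R\ +\ \int_{B_p(R)}\mathrm{Ric}(\partial r)\,dV\ +\ o(R) ,
\]
where the $o(R)$ absorbs a fixed small--$r$ contribution, the nonpositive cut--locus corrections, and $-\int_{\partial B_p(R)}H\,dA$ (controlled since $\int_{\partial B_p(R)}H\,dA\le\tfrac2R\,\mathrm{Area}(\partial B_p(R))$ and is bounded below once the spheres have controlled area). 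So the theorem reduces to showing $\int_{B_p(R)}\mathrm{Ric}(\partial r)\,dV=o(R)$.

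This bound — and, behind it, that $M$ has at most linear volume growth — is the heart of the matter, and it is here that $S\ge a>0$ is indispensable. From $|A|^2+\mathrm{Ric}(\partial r)=-\partial_r H$, a cut--off integration by parts together with $H\le 2/r$ shows $\int_M\mathrm{Ric}(\partial r)\,dV\le\int_M\big(|A|^2+\mathrm{Ric}(\partial r)\big)\,dV\lesssim\int_1^\infty r^{-2}\,\mathrm{Area}(\partial B_p(r))\,dr$, so everything comes down to a uniform area bound $\mathrm{Area}(\partial B_p(r))\le C$. To prove this, suppose $\mathrm{Area}(\partial B_p(r_i))\to\infty$ along $r_i\to\infty$; rescaling the metric by $\mathrm{Area}(\partial B_p(r_i))$ and using bounded geometry and the structure of nonnegatively Ricci--curved spaces at large scales, one is forced into a configuration in which uniformly positive scalar curvature coexists with cross-sections of bounded Gauss curvature but unbounded area. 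Gauss--Bonnet then forbids this: from the Gauss equation, $\tfrac a2\,\mathrm{Area}(\Sigma_r)\le\int_{\Sigma_r}K_{\Sigma_r}\,dA+\int_{\Sigma_r}\big(\mathrm{Ric}(\partial r)-\det A\big)\,dA\le 4\pi+\int_{\Sigma_r}\big(\mathrm{Ric}(\partial r)-\det A\big)\,dA$, so an unbounded area needs an unbounded positive defect, and a finer analysis of the radial Riccati equation (using again $H\le 2/r$ and $\mathrm{Ric}\ge 0$) rules this out. With the area bound in hand, $\int_{B_p(R)}\mathrm{Ric}(\partial r)\,dV$ is in fact bounded, the displayed inequality gives $\int_{B_p(R)}S\,dV\le 8\pi R+o(R)$, and the sharp constant $8\pi=2\cdot 2\pi\chi(S^2)$ is precisely the Gauss--Bonnet mass of the $2$-sphere cross-section at infinity.

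The principal obstacle is thus the uniform area bound for geodesic spheres, equivalently linear volume growth, and — more delicate still — the identification of the \emph{optimal} constant, which forces one to understand precisely how $M$ collapses in its cross-sectional directions at infinity. A secondary but genuine technical point is the rigorous treatment of the cut locus, where $\partial B_p(r)$ is only Lipschitz; throughout, one must check that the singular contributions in the Laplacian comparison and in Gauss--Bonnet have the sign that only helps.
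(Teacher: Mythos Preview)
Your argument has a genuine gap at the step where you assert that ``$M$ has one end, so that $\partial B_p(r)$ is a connected closed surface for all large $r$ and $\chi(\partial B_p(r))\le 2$.'' One end does \emph{not} force geodesic spheres to be connected: the complement $M\setminus \overline{B_p(r)}$ can have many bounded components (``fingers''), each contributing a component to $\partial B_p(r)$ and hence up to $4\pi$ to the Gauss--Bonnet term. Without a bound on the number of such components you only get $\int_{\Sigma_r}S\le 8\pi k(r)+\ldots$ with $k(r)$ the number of components, and there is no a priori control on $k(r)$. This is not a technicality but the central obstruction; the paper explicitly identifies it as ``one serious obstacle'' and builds the whole machinery to get around it.

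The paper's route is quite different from yours. It replaces $r$ by a smooth proper function $\rho$ solving $\Delta\rho=-1+|\nabla\rho|^2$, with $|\nabla\rho|^2-1\le 10/r$ (this already sidesteps your cut-locus issues). It then splits each level set $\{\rho=s\}$ into $\ell_0(s)$, the boundary of the unbounded component of $\{\rho>s\}$, and the finger-boundaries $\ell_1(s)$. Liu's theorem gives $M\cong\mathbb{R}^3$, from which $\ell_0(s)$ is connected; this is where the sharp $8\pi$ enters via Gauss--Bonnet on $\ell_0(s)$. The fingers are handled separately: the lower bound $S\ge 1$ feeds into the $\mu$-bubble diameter estimate of Chodosh--Li--Stryker to pin $\ell_1(s)\subset M_{s-C\ln s}$, and then the upper bound $S\le K$ is used to bound $\int\!\!\int_{\ell_1(s)}S|\nabla\rho|$ by a lower-order term $CK\omega(R)$. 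Linear volume growth is quoted from \cite{MW1}, not derived by a blow-up; your rescaling sketch does not work as written since the rescaled scalar curvature lower bound degenerates, so the ``uniformly positive scalar curvature'' you invoke in the limit is lost. In short, your outline is essentially the pole case (Xu \cite{X1}); the missing ingredients for the general case are exactly the smooth replacement $\rho$, the $\ell_0/\ell_1$ decomposition, and the $\mu$-bubble control of fingers.
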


In view of the result, it seems reasonable to conjecture that the same conclusion holds without any restriction on the scalar curvature.  
Indeed, this has been shown to be true by Xu \cite{X1} if $M$ admits a pole. 

\begin{conjecture}
\label{CY}Let $\left( M^{3},g\right) $ be a three-dimensional complete manifold with Ricci curvature  $\mathrm{Ric}\geq 0.$
Then its scalar curvature $S$ satisfies 
\begin{equation*}
\limsup_{R\rightarrow \infty }\frac{1}{R}\int_{B_{p}(R)}S\leq 8\pi.
\end{equation*}
\end{conjecture}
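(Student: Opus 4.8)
The plan is to combine the Gauss equation on geodesic spheres with the coarea formula, reduce the estimate to a bound on the integrated radial Ricci curvature, and prove that bound from the Bishop comparison inequality in differential form. Since $M$ with more than one end is covered by the product case recalled in the introduction, assume $M$ has one end. Write $r=d(p,\cdot)$, let $\mathcal{A}(r,\theta)$ be the volume density of $\exp_p$ in polar coordinates, and $A(r)=\left\vert\partial B_p(r)\right\vert$. On the regular part of a geodesic sphere $\partial B_p(r)$, with principal curvatures $\lambda_1,\lambda_2$ relative to $\nabla r$, the Gauss equation and the three‑dimensional identity ``sectional curvature of the tangent plane of the sphere $=\tfrac12 S-\mathrm{Ric}(\nabla r,\nabla r)$'' give $\tfrac12 S=K_{\partial B_p(r)}+\mathrm{Ric}(\nabla r,\nabla r)-\lambda_1\lambda_2$. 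Integrating over $\partial B_p(r)$, applying Gauss--Bonnet, integrating in $r$, and using the Riccati identity $\partial_r(\Delta r)+\left\vert\mathrm{Hess}\,r\right\vert^2+\mathrm{Ric}(\nabla r,\nabla r)=0$ to trade $\int\lambda_1\lambda_2$ for $\int\mathrm{Ric}(\nabla r,\nabla r)$ and $A'(R)$, one is led to an identity of the form
\begin{equation*}
\int_{B_p(R)}S=2\int_0^R 2\pi\,\chi\bigl(\partial B_p(r)\bigr)\,dr+\int_{B_p(R)}\mathrm{Ric}(\nabla r,\nabla r)-A'(R)+\mathcal{E}(R),
\end{equation*}
where $\mathcal{E}(R)$ collects corrections supported on the cut locus of $p$ (Gauss--Bonnet boundary and exterior‑angle terms along $\partial B_p(r)\cap\mathrm{Cut}(p)$, and the jump of $\mathcal{A}$ across $\mathrm{Cut}(p)$).

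I would first treat the model case in which $p$ is a pole---Xu's setting \cite{X1}---where there is no cut locus, every geodesic sphere is diffeomorphic to $S^2$, $\mathcal{E}\equiv0$, and the identity reads $\int_{B_p(R)}S=8\pi R+\int_{B_p(R)}\mathrm{Ric}(\nabla r,\nabla r)-A'(R)$. The key point is that $\mathrm{Ric}\ge0$ makes $\psi(t,\theta):=\mathcal{A}(t,\theta)^{1/2}$ concave on $(0,\infty)$, with $\psi(0,\theta)=0$, $\partial_t\psi(0^+,\theta)=1$, and in fact (since $\psi>0$ for all $t$ along a ray) $\partial_t\psi\in[0,1]$; this already gives $A'(R)=2\int_{S^2}\psi\,\partial_R\psi\,d\theta\ge0$. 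Moreover the Bishop identity $\psi''=-\tfrac12\psi\bigl(\mathrm{Ric}(\gamma_\theta',\gamma_\theta')+\tfrac12(\lambda_1-\lambda_2)^2\bigr)$ yields $\mathcal{A}\,\mathrm{Ric}(\gamma_\theta',\gamma_\theta')\le-2\psi\psi''$, so integrating by parts on $[0,R]$,
\begin{equation*}
\int_0^R\mathcal{A}(t,\theta)\,\mathrm{Ric}(\gamma_\theta',\gamma_\theta')\,dt\le 2\int_0^R(\psi')^2\,dt-2\psi(R,\theta)\psi'(R,\theta)\le 4\bigl(\psi(R,\theta)-L(\theta)R\bigr),
\end{equation*}
where $L(\theta)=\lim_{R\to\infty}\psi(R,\theta)/R$ and the last step uses $0\le\psi'-L\le1$. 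Since $\psi(R,\theta)/R$ decreases to $L(\theta)$ and is bounded, monotone convergence in $\theta\in S^2$ gives $\int_{B_p(R)}\mathrm{Ric}(\nabla r,\nabla r)\le 4\bigl(\int_{S^2}\psi(R,\theta)\,d\theta-R\int_{S^2}L\bigr)=o(R)$; with $A'(R)\ge0$ this gives $\limsup_R\tfrac1R\int_{B_p(R)}S\le 8\pi$ in the pole case. The constant $8\pi$ is sharp, being approached by $N^2\times\mathbb{R}$ with $N$ a paraboloid and by the three‑dimensional Bryant soliton.

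For general one‑ended $M$ the same scheme should work once the cut locus is handled: the inequality $A'(R)\ge0$ and the Ricci estimate must be re‑derived on each segment $[0,c(\theta)]$ (where $\psi$ is still positive and the Bishop identity still holds), and $\mathcal{E}(R)$ must be shown to be $\le o(R)$. The proof would then reduce to two estimates: (a) $2\int_0^R 2\pi\,\chi(\partial B_p(r))\,dr$ together with the Gauss--Bonnet part of $\mathcal{E}(R)$ is at most $8\pi R+o(R)$; and (b) the contribution of the ``short'' directions $\{\theta:c(\theta)\le R\}$ to $\int_{B_p(R)}\mathrm{Ric}(\nabla r,\nabla r)$ is $o(R)$.

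I expect (a) and (b) to be the real obstacle, and they are exactly the points at which the extra hypothesis of Theorem~\ref{A}---the two‑sided bound on $S$---is being used here: it forces the geodesic spheres at scale $r$ to be uniformly regular, keeping $\chi(\partial B_p(r))\le2$ and confining the cut locus. Without it, bounding the topology of large geodesic spheres and ruling out an abundance of ``fresh'' cut points at scale $R$ appears to require the asymptotic structure theory of complete three‑manifolds with $\mathrm{Ric}\ge0$: tangent cones at infinity, a volume‑growth dichotomy (almost maximal growth forcing near rigidity by Cheeger--Colding, collapsing directions governed by the almost‑splitting theorem), and perhaps finer classification. Establishing (a) and (b), and hence the conjecture, in this generality is the part that remains open.
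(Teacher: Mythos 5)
This statement is labeled a \emph{Conjecture} in the paper, and the paper does not prove it; it only proves the weaker Theorem~\ref{A}, under the extra hypothesis that $S$ is pinched between two positive constants. Your sketch is therefore not to be measured against a ``proof in the paper''---there is none---and you are right to flag the steps (a) and (b) as open. Your pole-case argument is essentially correct and is a clean alternative route to Xu's result \cite{X1}: the Gauss equation on spheres, Gauss--Bonnet, the Riccati identity, and the concavity of $\psi=\mathcal{A}^{1/2}$ from Bishop comparison do combine as you describe, and the dominated-convergence step giving $\int_{B_p(R)}\mathrm{Ric}(\nabla r,\nabla r)=o(R)$ is valid when there is no cut locus. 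So the proposal is a reasonable strategy with a correctly identified gap, not a flawed proof.

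Where your diagnosis goes astray is in the role you ascribe to the two-sided scalar curvature bound in Theorem~\ref{A}. That bound does \emph{not} regularize the geodesic spheres, confine the cut locus, or force $\chi(\partial B_p(r))\le 2$; the paper never attempts to control geodesic spheres directly. Instead it sidesteps the cut locus entirely by replacing $r$ with a smooth proper function $\rho$ solving $\Delta\rho=-1+|\nabla\rho|^2$ (Proposition~\ref{rho}), whose gradient is close to $1$ outside a small set. It then integrates the Bochner formula over $\{t_0<\rho<t\}$, converts the Ricci term via Lemma~\ref{RicS}, and applies Gauss--Bonnet only on the piece $\ell_0(s)$ of the level set bounding the unbounded component of $M\setminus\{\rho<s\}$---which is connected for purely topological reasons (Lemma~\ref{l}, via Liu and Li--Tam), with no curvature hypothesis. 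The scalar curvature pinching enters elsewhere: $S\geq 1$ is used together with the $\mu$-bubble diameter estimate of \cite{CLS} (Lemma~\ref{bubble}) to show that the ``fingers'' $\ell_1(s)$ lie in a thin annulus $M_{t-C\ln t}$ (Proposition~\ref{Finger_rho}), which makes $\int_{t_0}^t\int_{\ell_1(s)}|\nabla\rho|\,ds$ small (Lemma~\ref{l1}); the upper bound $S\leq K$ is then used to convert the $S|\nabla\rho|^2$ estimate into an estimate for $\int S$ and to absorb the finger contributions. So the obstruction in the general case is indeed the topology of level sets and the fingers, as you anticipated, but the paper's solution is to trade $r$ for $\rho$ and control fingers with $\mu$-bubbles rather than to try to tame the cut locus of $r$.
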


Note that the quantity on the left hand side is scaling invariant and the constant $8\pi$ on the right hand side is the best possible.
We emphasize that our proof of Theorem \ref{A} actually yields an upper bound of $\int_{B_{p}\left( R\right) }S$ under the localized assumption that 
$S\geq 1$ on $B_{p}\left( 2R\right)$.   For a large enough universal constant $r_0$, we denote with $N_{r_0}$ the  unbounded connected component of $M\setminus \overline{B_p(r_0)}$.  Evidently, the set $D_{r_0}:=M\setminus N_{r_0}$ is compact in $M$ and is equal to the union of $\overline{B_p(r_0)}$ with all bounded connected components of $M\setminus \overline{B_p(r_0)}$. 

\begin{theorem}
\label{B}Let $\left( M^{3},g\right) $ be a complete noncompact
three-dimensional manifold with $\mathrm{Ric}\geq 0.$ Assume that the scalar
curvature of $\left( M,g\right) $ satisfies 
\begin{equation*}
1\leq S\leq K\text{ \ on }B_{p}\left( 2R\right)
\end{equation*}%
for some constant $K>1$ and $R>2.$ Then there exists a universal constant $C>0$ such that
\begin{equation}
\int_{B_{p}\left( R\right) }S\leq 8\pi R+CK\int_{1}^{R}\mathrm{V}\left(
B_{p}\left( s\right) \right) \frac{\ln s}{s^{2}}ds+CK\mathrm{V}(D_{r_0}).  \label{Est}
\end{equation}
\end{theorem}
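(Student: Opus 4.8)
The plan is to combine the Gauss--Bonnet formula on the geodesic spheres $\partial B_{p}(r)$ with the Bochner (Riccati) identity for the distance function $r(x)=d(p,x)$, and then to absorb the resulting error into the right hand side of \eqref{Est} by means of the Laplacian comparison theorem. I would work throughout in the end $N_{r_{0}}$, so that for $r$ large $\partial B_{p}(r)$ is a single sphere, and treat $B_{p}(r_{0})$ together with the bounded components of $M\setminus\overline{B_{p}(r_{0})}$ separately. For almost every $r$, $\partial B_{p}(r)$ is a finite union of closed surfaces away from a null set coming from the cut locus, and in dimension three the Gauss equation gives, for its intrinsic Gauss curvature,
\begin{equation*}
K_{\partial B_{p}(r)}=\tfrac12 S-\mathrm{Ric}(\nabla r,\nabla r)+\det \mathrm{II},
\end{equation*}
where $\mathrm{II}$ is the second fundamental form with respect to $\nabla r$. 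Integrating over $\partial B_{p}(r)$ and applying Gauss--Bonnet,
\begin{equation*}
\tfrac12\int_{\partial B_{p}(r)}S=2\pi\chi\big(\partial B_{p}(r)\big)+\int_{\partial B_{p}(r)}\big(\mathrm{Ric}(\nabla r,\nabla r)-\det\mathrm{II}\big).
\end{equation*}
For $r$ beyond the universal radius $r_{0}$ the relevant level set is connected, so $\chi\le 2$ and the first term contributes at most $8\pi R$ after integrating in $r\in[r_{0},R]$; the pieces coming from $B_{p}(r_{0})$, from the bounded components, and from the boundary terms along $\partial B_{p}(r_{0})$ are all bounded by $CK\,\mathrm V(D_{r_{0}})$, using that in dimension three $0\le\mathrm{Ric}\le K\,g$ forces $|\mathrm{Rm}|\le CK$ on $B_{p}(2R)$.

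The core of the proof is the bound on $\int_{B_{p}(R)}\big(\mathrm{Ric}(\nabla r,\nabla r)-\det\mathrm{II}\big)$. Writing $m=\Delta r$ and using the trace Riccati equation $\partial_{r}m=-|\mathrm{II}|^{2}-\mathrm{Ric}(\nabla r,\nabla r)$ together with $2\det\mathrm{II}=m^{2}-|\mathrm{II}|^{2}$, the integrand becomes $-\partial_{r}m-\tfrac12\big(|\mathrm{II}|^{2}+m^{2}\big)$. I would integrate this over $B_{p}(R)$ by the coarea formula and integrate by parts in the radial variable — the cut locus entering with a favorable sign, since the singular part of $\Delta r$ is nonpositive — thereby reducing matters to a one--dimensional estimate along the minimizing geodesics from $p$. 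There the comparison $m\le 2/r$ controls the radial part $\tfrac12 m_{+}$ of the mean curvature by $1/r$, while the remaining deviations (of $m$ from $2/r$, and of $\mathrm{II}$ from being umbilic) are controlled by the curvature accumulated along the geodesic, which by $|\mathrm{Rm}|\le CK$ is at most $CK\,\mathrm V(B_{p}(s))$ up to radius $s$. Weighting by the Bishop volume density (which is $\le s^{2}$) and carrying the $1/r^{2}$ factors from the comparison, a careful bookkeeping of these errors produces exactly the kernel $\mathrm V(B_{p}(s))\,\tfrac{\ln s}{s^{2}}$ in \eqref{Est}, with the logarithmic factor entering in the course of integrating the accumulated estimate; all the compact and genuinely singular leftovers are swept into $CK\,\mathrm V(D_{r_{0}})$.

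The main obstacle is precisely this error estimate. The integrand $\mathrm{Ric}(\nabla r,\nabla r)-\det\mathrm{II}$ is far from pointwise small — already on flat $\mathbb R^{3}$ its integral over $B_{p}(R)$ is comparable to $-8\pi R$ — so one cannot afford to discard $|\mathrm{II}|^{2}$: near a conjugate point both $\det\mathrm{II}$ and $|\mathrm{II}|^{2}$ blow up and only their combination remains integrable, and the argument must preserve these cancellations while still extracting the sharp $\mathrm V(B_{p}(s))\,\tfrac{\ln s}{s^{2}}$ weight. A secondary difficulty, more technical than conceptual, is the rigorous treatment of the cut locus and of the topology of $\partial B_{p}(r)$ (its connectedness, hence $\chi\le 2$, for large $r$), which I would dispose of by working with almost every $r$, by the structure of manifolds with $\mathrm{Ric}\ge 0$ and one end, and by the crude bound $\int_{B_{p}(r_0)}S\le K\,\mathrm V(D_{r_0})$ on the compact core.
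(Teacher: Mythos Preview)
Your proposal has a genuine gap at the point you dismiss as ``secondary, more technical than conceptual'': the connectedness of the level sets. It is \emph{not} true that for $r>r_{0}$ the sphere $\partial B_{p}(r)$ (or even $\partial B_{p}(r)\cap N_{r_{0}}$) is connected on a three-manifold with $\mathrm{Ric}\ge 0$ and one end. The complement $M\setminus\overline{B_{p}(r)}$ can have, besides the single unbounded component $N_{r}$, arbitrarily many bounded components (``fingers''), and their boundaries are additional components of $\partial B_{p}(r)$. Since $M\simeq\mathbb{R}^{3}$ these are all spheres, so $\chi(\partial B_{p}(r))=2k$ with $k$ the number of components; your Gauss--Bonnet step then yields $4\pi k$ rather than $4\pi$, and there is no a priori bound on $\int_{r_{0}}^{R}k(r)\,dr$ from the one-end structure alone. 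This is precisely the obstacle the paper is built to overcome, and it is where the hypothesis $S\ge 1$ on $B_{p}(2R)$ enters --- a hypothesis your argument never uses.

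The paper's route is therefore substantially different from yours. It replaces $r$ by a smooth proper function $\rho$ with $\Delta\rho=|\nabla\rho|^{2}-1$ and $|\nabla\rho|^{2}\le 1+10/r$ (Proposition~\ref{rho}), so that no cut-locus issues arise and the Bochner identity can be integrated cleanly. Each level set $\{\rho=s\}$ is then split as $\ell_{0}(s)\cup\ell_{1}(s)$, with $\ell_{0}(s)=\partial M_{s}$ the (connected) boundary of the unbounded piece; Gauss--Bonnet is applied only on $\ell_{0}(s)$, giving the sharp $8\pi R$. The finger contribution $\int_{t_{0}}^{t}\int_{\ell_{1}(s)}|\nabla\rho|$ is bounded separately (Lemma~\ref{l1}) by showing that the fingers are thin: this uses the $\mu$-bubble diameter estimate of Chodosh--Li--Stryker (Lemma~\ref{bubble}), which is exactly where $S\ge 1$ is spent, combined with an Abresch--Gromoll excess argument (Lemma~\ref{Finger}, Proposition~\ref{Finger_rho}). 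The upper bound $S\le K$ is used only to convert the integral of $S|\nabla\rho|$ on $\ell_{1}$ into a volume and to pass from $\int S|\nabla\rho|^{2}$ back to $\int S$. Your Riccati/comparison sketch for the ``error term'' does not produce the finger control, and without it the argument cannot close.
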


Theorem \ref{A} follows from Theorem \ref{B} as the volume $\mathrm{V}\left(B_{p}\left( s\right) \right)$ of the ball $B_p(s)$
satisfies $\mathrm{V}\left(B_{p}\left( s\right) \right)\leq Cs$ according to \cite{MW1}.

We now indicate some of the ideas involved in the proof of Theorem \ref{B}. Essential to our argument is the following lemma.
It follows directly from the Gauss curvature equation together with some algebraic manipulations (see e.g. \cite{MW2} for details).
  
\begin{lemma}
\label{RicS}Let $u$ be a smooth function on three-dimensional complete
Riemannian manifold $\left( M, g\right).$ Then on each regular level set $\{u=t\}$ of $u,$

\begin{eqnarray*}
\mathrm{Ric}\left( \nabla u,\nabla u\right) \left\vert \nabla u\right\vert
^{-2} &=&\frac{1}{2}S-\frac{1}{2}S_{t}+\frac{1}{2}\frac{1}{\left\vert \nabla
u\right\vert ^{2}}\left( \left\vert \nabla \left\vert \nabla u\right\vert
\right\vert ^{2}-\left\vert \nabla ^{2}u\right\vert ^{2}\right) \\
&&+\frac{1}{2}\frac{1}{\left\vert \nabla u\right\vert ^{2}}\left( \left(
\Delta u\right) ^{2}-2\frac{\left\langle \nabla \left\vert \nabla
u\right\vert ,\nabla u\right\rangle }{\left\vert \nabla u\right\vert }\Delta
u+\left\vert \nabla \left\vert \nabla u\right\vert \right\vert ^{2}\right) ,
\end{eqnarray*}%
where $S_{t}$ denotes the scalar curvature of the surface $\{u=t\}.$
\end{lemma}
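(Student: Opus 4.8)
The plan is to regard the regular level set $\Sigma_t:=\{u=t\}$ as a smooth surface in $M^3$ with unit normal $\nu:=\nabla u/\left\vert \nabla u\right\vert$ (well defined because $t$ is a regular value), and to combine the Gauss equation with an explicit description of the second fundamental form of $\Sigma_t$ in terms of $u$. Concretely, I would fix a point of $\Sigma_t$ together with an orthonormal frame $\{e_1,e_2,e_3\}$ in which $e_3=\nu$ and $e_1,e_2$ span $T\Sigma_t$. Contracting the Gauss equation twice over the tangential directions produces the classical identity
\begin{equation*}
S_t = S - 2\,\mathrm{Ric}(\nu,\nu) + H^2 - \left\vert h\right\vert^2,
\end{equation*}
where $h$ is the second fundamental form of $\Sigma_t$ with respect to $\nu$, $H=\mathrm{tr}_{\Sigma_t}h$ its mean curvature, and $S_t$ its scalar curvature. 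Since $\mathrm{Ric}(\nu,\nu)=\left\vert \nabla u\right\vert^{-2}\mathrm{Ric}(\nabla u,\nabla u)$, solving for it already delivers the terms $\tfrac12 S-\tfrac12 S_t$ together with $\tfrac12\big(H^2-\left\vert h\right\vert^2\big)$, so the remaining task is purely computational: to match $\tfrac12\big(H^2-\left\vert h\right\vert^2\big)$ with the two bracketed expressions in the statement.

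For this I would use the standard fact that for $X,Y$ tangent to $\Sigma_t$ one has $h(X,Y)=\left\vert \nabla u\right\vert^{-1}\nabla^2 u(X,Y)$ — indeed $\langle\nabla_X\nu,Y\rangle=\left\vert \nabla u\right\vert^{-1}\langle\nabla_X\nabla u,Y\rangle$, the term from differentiating $\left\vert \nabla u\right\vert^{-1}$ being proportional to $\langle\nabla u,Y\rangle=0$. Combined with the elementary identity $\nabla^2 u(\nabla u,\cdot)=\tfrac12\nabla\left\vert \nabla u\right\vert^2=\left\vert \nabla u\right\vert\nabla\left\vert \nabla u\right\vert$, this gives the normal components of the Hessian, namely $\nabla^2 u(\nu,\nu)=\left\vert \nabla u\right\vert^{-1}\langle\nabla\left\vert \nabla u\right\vert,\nabla u\rangle$ and $\nabla^2 u(e_i,\nu)=\langle\nabla\left\vert \nabla u\right\vert,e_i\rangle$ for $i=1,2$. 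From these one reads off $H=\left\vert \nabla u\right\vert^{-1}\big(\Delta u-\nabla^2 u(\nu,\nu)\big)$ and, after subtracting the normal contributions to $\left\vert \nabla^2 u\right\vert^2$,
\begin{equation*}
\left\vert h\right\vert^2 = \left\vert \nabla u\right\vert^{-2}\Big(\left\vert \nabla^2 u\right\vert^2 - 2\sum_{i=1}^2\big(\nabla^2 u(e_i,\nu)\big)^2 - \big(\nabla^2 u(\nu,\nu)\big)^2\Big).
\end{equation*}

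Finally I would expand $H^2-\left\vert h\right\vert^2$ using these expressions. The contributions proportional to $\langle\nabla\left\vert \nabla u\right\vert,\nabla u\rangle^2$ coming from $H^2$ on the one hand, and from $2\sum_i\big(\nabla^2 u(e_i,\nu)\big)^2+\big(\nabla^2 u(\nu,\nu)\big)^2$ on the other, cancel exactly, leaving
\begin{equation*}
H^2 - \left\vert h\right\vert^2 = \left\vert \nabla u\right\vert^{-2}\Big(2\,\left\vert \nabla\left\vert \nabla u\right\vert\right\vert^2 - \left\vert \nabla^2 u\right\vert^2 + (\Delta u)^2 - 2\,\left\vert \nabla u\right\vert^{-1}\langle\nabla\left\vert \nabla u\right\vert,\nabla u\rangle\,\Delta u\Big).
\end{equation*}
Grouping one copy of $\left\vert \nabla\left\vert \nabla u\right\vert\right\vert^2$ with $-\left\vert \nabla^2 u\right\vert^2$ and the other copy with the $\Delta u$ terms reproduces the two parentheses in the lemma, and substituting into the traced Gauss equation finishes the proof. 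I do not anticipate a genuine obstacle: the whole argument is a frame computation, and the only delicate point is keeping the tangential/normal decomposition of $\nabla^2 u$ and the powers of $\left\vert \nabla u\right\vert$ straight so that the $\langle\nabla\left\vert \nabla u\right\vert,\nabla u\rangle^2$ terms cancel as claimed.
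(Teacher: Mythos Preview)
Your proposal is correct and follows exactly the route the paper indicates: the authors state that the lemma ``follows directly from the Gauss curvature equation together with some algebraic manipulations'' and refer to \cite{MW2} for details, which is precisely your twice-traced Gauss equation $S_t = S - 2\,\mathrm{Ric}(\nu,\nu) + H^2 - |h|^2$ combined with the computation of $H^2-|h|^2$ in terms of $\nabla^2 u$ and $\nabla|\nabla u|$. Your bookkeeping is accurate, including the cancellation of the $\langle\nabla|\nabla u|,\nu\rangle^2$ contributions.
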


The lemma has its origin in Schoen and Yau \cite{SY}, where they made the important observation that on
a minimal surface $N$ in a three-dimensional manifold $M,$

\begin{equation}
\mathrm{Ric}\left( \nu,\nu\right)=\frac{1}{2}S-\frac{1}{2}S_{N }-\frac{1}{2}%
\left\vert A\right\vert^2  \label{a1}
\end{equation}
with $\nu,$ $S_{N}$ and $A$ being the unit normal vector, the scalar
curvature and the second fundamental form of $N,$ respectively. This
observation played an important role in their study of stable minimal surfaces in a
three-dimensional manifold with nonnegative scalar curvature. 
Later, an identity of the nature (\ref{a1}) was derived for any
surface $N,$ not necessarily minimal, by Jezierski and Kijowski in \cite{JK}, which
obviously includes the lemma as a special case with $N$ being the level set of function $u.$ 
Note that Stern \cite{St} has also established the lemma when $u$ is a harmonic function. 

Now start with the standard Bochner formula
\begin{equation*}
\Delta \left\vert \nabla u\right\vert =\left( \left\vert u_{ij}\right\vert
^{2}-\left\vert \nabla \left\vert \nabla u\right\vert \right\vert
^{2}+\mathrm{Ric}\left( \nabla u,\nabla u\right) \right)\left\vert \nabla u\right\vert ^{-1}+\langle \nabla u, \nabla \Delta u\rangle \left\vert \nabla u\right\vert ^{-1},
\end{equation*}
where $u$ is a smooth proper function. After integrating the equation over the compact domain $\{1<u<R\}$
and invoking the co-area formula, one gets a term of the form 
$\mathrm{Ric}\left( \nabla u,\nabla u\right) \left\vert \nabla u\right\vert^{-2}.$ Rewriting it by the lemma and applying the Gauss-Bonnet formula to the integral of $S_t$ on the level set, one obtains 
an upper estimate of the integral $\int_{\{1<u<R\}} S\left\vert \nabla u\right\vert$ by the Euler characteristic 
numbers of regular level sets $\{u=t\}$ for $1<t<R.$ This argument shows that Question \ref{QY} can be affirmatively answered in dimension three provided that $M$ admits a smooth proper function $u$ with $\left\vert \nabla u\right\vert$ bounded from below by a positive constant and its level sets are connected. This is certainly the case when $M$ admits a pole $p,$ where $u$ is the distance function $r(x)=d(x,p).$ This is exactly how Xu \cite{X1} verifies Conjecture \ref{CY} for this case (see also the work of Zhu \cite{Z}). In general, however, the distance function may not be smooth, and the geodesic spheres may have many connected components. Instead, we construct a smooth function $\rho$ as a replacement. Such function $\rho$ is distance-like in the sense that it is proper and the length of its gradient is almost $1$ except for a small set. This latter fact is sufficient to convert the estimate of $\int_{\{1<\rho<R\}} S\,\left\vert \nabla \rho\right\vert$ into one  for $\int_{\{1<\rho<R\}} S.$ However, we still face one serious obstacle for successfully carrying out the argument, namely, the control of the number of connected components of the level sets of $\rho.$ This is where the assumption on $S$ comes into play. Indeed, we divide each level set $\{\rho=t\}$ into two disjoint parts $\ell_0(t)$ 
and $\ell_1(t),$ with $\ell_0(t)$ being the boundary of the only unbounded component of $M\setminus \{\rho<t\}$ and $\ell_1(t)$ the rest. It can be shown that $\ell_0(t)$ is always connected. Also, $\ell_1(t)$ consists of the boundary of all bounded components of $M\setminus \{\rho<t\}$, which we call fingers. The upper bound of $S$ is then used to estimate the integral of the Ricci curvature term on $\ell_1(t),$ while Lemma \ref{RicS} and the Gauss-Bonnet formula are applied to control it on $\ell_0(t).$  The proof of Theorem \ref{B} is then completed by
showing that the size of the fingers are all small relative to $R,$ for which the lower bound of $S$ is required in order to 
appeal to the $\mu$-bubble diameter estimate in \cite{CLS}.

It should be noted that in some previous work we explored the idea of combining the Bochner formula with Lemma \ref{RicS} with $u$ primarily being a harmonic function. In \cite{MW3} a sharp monotonicity formula of the form
\begin{equation*}
\frac{d}{dt}\left( \frac{1}{t}\int_{\ell\left( t\right) }\left\vert \nabla G\right\vert ^{2}-4\pi t\right) \leq 0
\end{equation*}
is derived for a proper, positive Green's function $G$ with $M$ being simply connected of one end and scalar curvature 
$S\geq 0$; in \cite{MW2} a sharp upper bound for the bottom spectrum $\lambda_1(M)$ of the form  $\lambda_1(M)\leq 1$ is established
for $M$ with finitely many ends, finite first Betti number and $S\geq -6$. In \cite{MW1} a volume estimate of the form 
$\mathrm{V}(B_p(R))\leq C\,R$ for all $R\geq 1$ is proved for $M$ with $\mathrm{Ric}\geq 0$ and $S\geq 1$; see \cite{CLS} for an alternative proof, \cite{Wa} for a localized version and \cite{X, HL} for a sharp version.  

One obvious advantage of a harmonic function is that its level sets within each end are necessarily connected due to the maximum principle. However, the drawback is that it is unclear how to obtain a workable lower bound
of the length of its gradient for the purpose of tackling Question \ref{QY}. This forces us to work instead with the distance-like function $\rho.$ 

Lastly, we mention that substantial progress has been made by Jiang and Naber (see \cite{N}), Smith \cite{S} and Zhu \cite{Z}  
toward Question \ref{QY} in arbitrary dimension. 

\begin{remark}
   Theorem \ref{B} can be generalized to bounds of $S$ that depend on distance function, such as  $\frac{1}{C_0 r(x)^a}\leq S(x)\leq C_0 r(x)^b$, where $a,b>0$. It can be seen that for small enough universal constants $a,b>0$, Theorem \ref{A} remains valid. 
\end{remark}

The paper is organized as follows. In section \ref{2}, we provide details on the construction of the distance like function $\rho.$
This construction is not dimension specific and may be useful for other purposes. In section \ref{3}, the primary focus is on 
the level sets of  $\rho.$ Mainly, it is shown that the size of fingers is well-controlled in terms of the scalar curvature lower bound.
Section \ref{4} is devoted to the proof of the main result Theorem \ref{B}. \\

\textbf{Acknowledgment:}   The first author was
partially supported by a Simons Foundation
grant.

\section{Construction of a distance-like function}\label{2}
 
In this section, we construct a smooth distance-like function. Such construction originated in \cite{SY94} and was further
developed in \cite{BS}. While the construction is not dimension specific, we choose to write down the details in the 
three-dimensional case. In the following, we use $C$ to denote a universal constant whose value may change
from line to line, $\mathrm{V}\left( \Omega \right) $ the volume of a set $\Omega\subset M,$ 
$\mathrm{A}\left( \Sigma \right) $ the area of a surface $\Sigma\subset M,$ and $r(x):=d(p,x)$ the distance to 
a fixed point $p\in M.$  Also,  we let $R_{0}>1$ be a large enough universal constant.

\begin{proposition}
\label{rho}Let $\left( M^{3},g\right) $ be a complete noncompact three-dimensional manifold with $\mathrm{Ric}\geq 0.$ 
Then there exists a
smooth function $\rho >0$ on $M\setminus B_{p}\left(1\right) $ such that
\begin{equation*}
\Delta \rho =-1+\left\vert \nabla \rho \right\vert ^{2}.
\end{equation*}
Moreover,
\begin{equation*}
r-3\ln r\leq \rho \leq r+6\ln r
\end{equation*}
and 
\begin{equation*}
\left\vert \nabla \rho \right\vert ^{2}-1\leq \frac{10}{r}\left\vert \nabla
\rho \right\vert ^{2}
\end{equation*}
on $M\setminus B_{p}\left(R_0\right)$ for some large constant $R_0$.
\end{proposition}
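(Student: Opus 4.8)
The plan is to obtain $\rho$ as a solution of the semilinear equation $\Delta \rho = -1 + |\nabla \rho|^2$ by the substitution $\rho = -\ln w$, which converts it into the \emph{linear} equation $\Delta w = w$ for a positive function $w$. Indeed, if $w>0$ and $\Delta w = w$, then $\nabla \rho = -w^{-1}\nabla w$, and a direct computation gives $\Delta \rho = -w^{-1}\Delta w + w^{-2}|\nabla w|^2 = -1 + |\nabla \rho|^2$. So the task reduces to producing a positive solution of $\Delta w = w$ on $M \setminus B_p(1)$ with precise two-sided exponential bounds in terms of $r(x)=d(p,x)$, namely something like $e^{-r - 6\ln r}\lesssim w \lesssim e^{-r + 3\ln r}$, together with a gradient estimate $|\nabla w|^2 \le (1 + 10/r)\, w^2$ off a large ball. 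Such a $w$ should be constructed following \cite{SY94, BS}: exhaust $M$ by domains $\Omega_j$, solve the Dirichlet problem $\Delta w_j = w_j$ on $\Omega_j \setminus B_p(1)$ with boundary data $w_j = 1$ on $\partial B_p(1)$ and $w_j = 0$ on $\partial \Omega_j$, and pass to a limit using uniform barriers.

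The main technical work is the construction of barrier functions. Because $\mathrm{Ric}\ge 0$, the Laplacian comparison theorem gives $\Delta r \le 2/r$ in the barrier sense. For an upper barrier one wants a radial function $\phi(r) = e^{-f(r)}$ with $\Delta \phi \le \phi$, i.e. $f'' - (f')^2 + (\Delta r) f' + 1 \le 0$; choosing $f(r) = r - 3\ln r$ and using $\Delta r \le 2/r$, one checks $f'' = 3/r^2$, $f' = 1 - 3/r$, so $f'' - (f')^2 + (\Delta r)f' + 1 = 3/r^2 - (1-3/r)^2 + (2/r)(1-3/r) + 1 = O(1/r^2) - \tfrac{6}{r^2} \cdot\text{(lower order)}$, which is negative for $r$ large; one then adjusts constants so the barrier dominates $w_j$ on the boundary. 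For the lower barrier one needs a supersolution-type comparison, and here the subtlety is that $\Delta r$ has no lower bound pointwise — but one only needs it in the distributional/barrier sense against the \emph{sub}solution inequality, and a radial function $\psi = e^{-g(r)}$ with $g(r) = r + 6\ln r$ satisfies $\Delta \psi \ge \psi$ in the appropriate sense provided the cut-locus contribution has the favorable sign, which it does. Comparing $w_j$ against $c\,\psi$ on $\Omega_j\setminus B_p(1)$ via the maximum principle yields the lower bound uniformly in $j$, hence for $w$.

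Once the two-sided bound $a\,\psi \le w \le b\,\phi$ is in hand with explicit $\phi,\psi$, translating into the claimed bounds $r - 3\ln r \le \rho = -\ln w \le r + 6\ln r$ on $M\setminus B_p(R_0)$ is immediate (absorbing the constants $a,b$ into the $\ln r$ terms by enlarging $R_0$). For the gradient estimate, the natural route is a Cheng–Yau / Bochner-type local gradient estimate for the positive solution $w$ of $\Delta w = w$: on a ball $B_x(1)$ with $\mathrm{Ric}\ge 0$ one gets $|\nabla \ln w|(x) \le C(1 + \sup_{B_x(1)} |\ln w - \ln w(x)|)$, and feeding in the Harnack-type oscillation control that follows from the two-sided bounds (which give $\ln w \approx -r$ with Lipschitz-in-$r$ behavior), one obtains $|\nabla \ln w| \le 1 + O(1/r)$, i.e. $|\nabla \rho|^2 \le 1 + C/r \le (1 + 10/r)|\nabla\rho|^2$ after noting $|\nabla\rho|$ is also bounded below near $1$. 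I expect the hardest part to be making the gradient estimate \emph{sharp} — getting the leading constant to be exactly $1$ rather than some larger universal constant — since the crude Cheng–Yau estimate only gives $|\nabla \ln w| \le C$; one likely needs a more careful barrier or Bochner argument exploiting that $w$ is close to the one-dimensional model $e^{-r}$, for which $|\nabla \ln w| = 1$ exactly, together with the $O(1/r)$ curvature-type error from $\mathrm{Ric}\ge 0$ and the logarithmic correction terms.
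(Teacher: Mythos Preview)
Your substitution $\rho=-\ln w$ with $\Delta w=w$ and the exhaustion construction match the paper exactly. The gap is in the barrier step.

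For the \emph{upper} barrier on $w$ (equivalently, the lower bound $\rho\ge r-3\ln r$) you want $\phi=e^{-f(r)}$ with $\Delta\phi\le\phi$. Since $\Delta\phi=\bigl((f')^2-f''-f'\,\Delta r\bigr)\phi$, the condition is $(f')^2-f''-f'\,\Delta r\le 1$, i.e.\ $f''-(f')^2+f'\,\Delta r+1\ge 0$; your displayed inequality has the sign reversed. More importantly, with $f(r)=r-3\ln r$ and $f'>0$ this rearranges to $\Delta r\ge\bigl((f')^2-f''-1\bigr)/f'\approx -6/r$, a \emph{lower} bound on $\Delta r$. Under $\mathrm{Ric}\ge 0$ Laplacian comparison only yields $\Delta r\le 2/r$; there is no pointwise lower bound, and the cut locus contributes a nonpositive singular measure. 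So a radial supersolution of this form cannot be built, and (consistently) your own computation, once the $1/r$ terms are kept, does not close. The paper obtains this bound by an $L^2$ method instead: multiply $h_R\,\Delta h_R=h_R^2$ by the weight $e^{2r-\ln r}$, integrate by parts on $B_p(R)\setminus B_p(2)$, and absorb the cross term by Cauchy--Schwarz to get $\int_{M\setminus B_p(2)} h^2 e^{2r-2\ln r}\le C\,\mathrm{V}(B_p(1))$; the pointwise bound $h\le e^{-r+3\ln r}$ then follows from a mean-value inequality together with Bishop--Gromov.

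For the gradient estimate you underestimate Cheng--Yau: applied to positive solutions of $\Delta h=h$ with $\mathrm{Ric}\ge 0$ it already gives the sharp leading constant, namely $|\nabla\ln h|^2\le 1+C/\sqrt{r}$. The genuine work is improving the error from $1/\sqrt{r}$ to $1/r$, and ``feeding oscillation control into Cheng--Yau'' does not do this. The paper argues directly via Bochner: writing $\sigma=|\nabla\rho|^2$, the equation $\Delta\rho=\sigma-1$ gives $\tfrac12\Delta\sigma\ge\tfrac13(\sigma-1)^2+\langle\nabla\rho,\nabla\sigma\rangle$, and the maximum principle applied to $G=\sigma-a/\rho-b/\rho^2$ (with $a=9$ and $b$ fixed by the initial radius) forces $G\le 1$, hence $\sigma\le 1+10/r$. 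The upper bound $\rho\le r+6\ln r$ then follows simply by integrating $|\nabla\rho|\le 1+5/r$ along minimizing geodesics from $p$; no lower barrier for $w$ is needed (and your lower-barrier comparison, while correct at the level of the differential inequality, would still require an Omori--Yau-type argument on the noncompact domain, since $w_j=0$ on the outer boundary).
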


\begin{proof}
Following the construction in Ch.1 of  \cite{SY94}, we solve the boundary value problem%
\begin{eqnarray*}
\Delta h_{R} &=&h_{R}\text{ \ in }B_{p}\left( R\right) \setminus B_{p}\left(
1\right) \\
h_{R} &=&1\text{ \ on }\partial B_{p}\left( 1\right) \\
h_{R} &=&0\text{ \ on }\partial B_{p}\left( R\right) .
\end{eqnarray*}%
The maximum principle implies that $0<h_{R}\leq 1$ and that $h_{R}$ is
increasing in $R$. Letting $R\rightarrow \infty ,$ we obtain a positive
function $0<h\leq 1$ on $M\setminus B_{p}\left( 1\right) $ satisfying%
\begin{eqnarray}
\Delta h &=&h\text{ \ in }M\setminus B_{p}\left( 1\right)  \label{d0} \\
h &=&1\text{ \ on }\partial B_{p}\left( 1\right) .  \notag
\end{eqnarray}%
The Cheng-Yau gradient estimate \cite{CY}, see also Ch.6 in \cite{Li}, implies that 
\begin{equation*}
\left\vert \nabla \ln h\right\vert ^{2}\leq 1+\frac{C}{\sqrt{r}}\text{ \ on }%
M\setminus B_{p}\left( 2\right) .
\end{equation*}%
The function 
\begin{equation*}
\rho \left( x\right) =-\ln h\left( x\right)
\end{equation*}%
therefore satisfies the equation%
\begin{equation}
\Delta \rho =-1+\left\vert \nabla \rho \right\vert ^{2}  \label{eq_rho}
\end{equation}%
and the estimate 
\begin{equation}
\left\vert \nabla \rho \right\vert ^{2}-1\leq \frac{C}{\sqrt{r}}\text{ \ on }%
M\setminus B_{p}\left( 2\right) .  \label{d1}
\end{equation}%
We first establish a linear lower bound for $\rho $. Indeed, integrating by
parts and using that $h_{R}=0$ on $\partial B_{p}\left( R\right) $, we have

\begin{eqnarray*}
\int_{B_{p}\left( R\right) \setminus B_{p}\left( 2\right) }h_{R}^{2}e^{2r-\ln
r} &=&\int_{B_{p}\left( R\right) \setminus B_{p}\left( 2\right)
}h_{R}(\Delta h_{R})e^{2r-\ln
r} \\
&=&-\int_{B_{p}\left( R\right) \setminus B_{p}\left( 2\right) } \left\vert \nabla h_{R}\right\vert ^{2} e^{2r-\ln
r}\\
&&-2\int_{B_{p}\left( R\right) \setminus B_{p}\left( 2\right) }\left\langle
\nabla h_{R},\nabla r\right\rangle \left( 1-\frac{1}{2r}\right)
h_{R}e^{2r-\ln r} \\
&&-e^{4-\ln 2}\int_{\partial B_{p}\left( 2\right) }h_{R}\frac{\partial h_{R}%
}{\partial \nu }.
\end{eqnarray*}%
Note that

\begin{eqnarray*}
&&-2\int_{B_{p}\left( R\right) \setminus B_{p}\left( 2\right) }\left\langle
\nabla h_{R},\nabla r\right\rangle \left( 1-\frac{1}{2r}\right)
h_{R}e^{2r-\ln r} \\
&\leq& \int_{B_{p}\left( R\right) \setminus B_{p}\left( 2\right) } \left\vert \nabla h_{R}\right\vert ^{2}e^{2r-\ln r}
+\int_{B_{p}\left( R\right) \setminus B_{p}\left( 2\right) }\left( 1-\frac{%
1}{2r}\right) ^{2} h_{R}^{2} e^{2r-\ln r}.
\end{eqnarray*}%
Moreover, (\ref{d1}) and the volume comparison theorem for $\mathrm{Ric}\geq 0$ imply that $$\left\vert \int_{\partial B_{p}\left(
2\right) }h_{R}\frac{\partial h_{R}}{\partial \nu }\right\vert \leq C\mathrm{A}(\partial B_p(2))\leq C\mathrm{V}(B_p(1)).$$
Therefore, 

\begin{equation*}
\int_{B_{p}\left( R\right) \setminus B_{p}\left( 2\right) } h_{R}^{2}e^{2r-\ln
r}\leq \int_{B_{p}\left( R\right) \setminus B_{p}\left( 2\right)
}\left( 1-\frac{1}{2r}\right) ^{2}h_{R}^{2}e^{2r-\ln r}+C\mathrm{V}(B_p(1)).
\end{equation*}
Simplifying the above inequality we get%
\begin{equation*}
\int_{B_{p}\left( R\right) \setminus B_{p}\left( 2\right) } h_{R}^{2}e^{2r-2\ln
r}\leq C\mathrm{V}(B_p(1))
\end{equation*}%
for all $R>2$. Taking $R\rightarrow \infty $, we arrive at

\begin{equation*}
\int_{M\setminus B_{p}\left( 2\right) } h^{2}e^{2r-2\ln r}\leq
C\mathrm{V}(B_p(1)).
\end{equation*}%
Together with (\ref{d1}) we get that%
\begin{eqnarray*}
h^{2}\left( x\right) &\leq &\frac{C}{\mathrm{V}\left( B_{x}\left( 1\right)
\right) }\int_{B_{x}\left( 1\right) }h^{2} \\
&\leq &\frac{C\mathrm{V}(B_p(1))}{\mathrm{V}\left( B_{x}\left( 1\right) \right) }e^{-2r\left(
x\right) +2\ln r\left( x\right) }
\end{eqnarray*}%
for every $x\in M\setminus B_{p}\left( 3\right) $. Since $\mathrm{Ric}\geq 0,$ 
by the Bishop-Gromov volume comparison theorem,  
$$
\frac{\mathrm{V}(B_p(1))}{\mathrm{V}\left( B_{x}\left( 1\right) \right) }\leq Cr(x)^3.
$$
Hence, we conclude that $h$ decays exponentially and
\begin{equation*}
h\left( x\right) \leq e^{-r\left( x\right) +3\ln r\left( x\right) }\text{ \
on }M\setminus B_{p}\left( r_{0}\right) 
\end{equation*}
for $r_0>C$  large enough. 
Since $\rho =-\ln h$, this proves that 
\begin{equation}
\rho \geq r-3\ln r\text{ \ on }M\setminus B_{p}\left( r_{0}\right) .
\label{lb}
\end{equation}%

To establish an analogous upper bound for $\rho $ we first improve (\ref{d1}%
) to the form%
\begin{equation*}
\left\vert \nabla \rho \right\vert ^{2}-1\leq \frac{C}{r}.
\end{equation*}
The Bochner formula and the equation $\Delta \rho =-1+\left\vert \nabla \rho
\right\vert ^{2}$ imply that 
\begin{eqnarray*}
\frac{1}{2}\Delta \left\vert \nabla \rho \right\vert ^{2} &=&\left\vert
\nabla ^{2}\rho \right\vert ^{2}+\mathrm{Ric}\left( \nabla \rho ,\nabla \rho
\right) +\left\langle \nabla \Delta \rho ,\nabla \rho \right\rangle \\
&\geq &\frac{1}{3}\left( \Delta \rho \right) ^{2}+\left\langle \nabla \rho
,\nabla \left\vert \nabla \rho \right\vert ^{2}\right\rangle .
\end{eqnarray*}%
The function $\sigma :=\left\vert \nabla \rho \right\vert ^{2}$ therefore
satisfies 
\begin{equation}
\frac{1}{2}\Delta \sigma \geq \frac{1}{3}\left( \sigma -1\right)
^{2}+\left\langle \nabla \rho ,\nabla \sigma \right\rangle .  \label{d3}
\end{equation}
We may assume by (\ref{d1}) that 
\begin{equation}
\left\vert \nabla \rho \right\vert ^{2}-1\leq \frac{1}{10}\text{ \ on }%
M\setminus B_{p}\left( r_{0}\right)  \label{d4}
\end{equation}%
by taking $r_{0}$ large enough. For constants $a$,$b>0$ to be specified
later, let 
\begin{equation*}
G:=\sigma -\frac{a}{\rho }-\frac{b}{\rho ^{2}}.
\end{equation*}%
We have 
\begin{eqnarray*}
\Delta \left( \frac{a}{\rho }+\frac{b}{\rho ^{2}}\right) &=&-\left( \frac{a}{%
\rho ^{2}}+\frac{2b}{\rho ^{3}}\right) \Delta \rho +\left( \frac{2a}{\rho
^{3}}+\frac{6b}{\rho ^{4}}\right) \left\vert \nabla \rho \right\vert ^{2} \\
&\geq &-\left( \frac{a}{\rho ^{2}}+\frac{2b}{\rho ^{3}}\right) \Delta \rho .
\end{eqnarray*}
As (\ref{d4}) and (\ref{eq_rho}) imply that $\Delta \rho \leq \frac{1}{10},$
it follows that 
\begin{equation*}
\Delta \left( \frac{a}{\rho }+\frac{b}{\rho ^{2}}\right) \geq -\frac{1}{10}%
\left( \frac{a}{\rho ^{2}}+\frac{2b}{\rho ^{3}}\right) .
\end{equation*}
By (\ref{d3}) we see that $G$ satisfies 
\begin{equation*}
\frac{1}{2}\Delta G\geq \frac{1}{3}\left( G-1+\frac{a}{\rho }+\frac{b}{\rho
^{2}}\right) ^{2}-\frac{1}{20}\left( \frac{a}{\rho ^{2}}+\frac{2b}{\rho ^{3}}%
\right) +\left\langle \nabla \rho ,\nabla \sigma \right\rangle .
\end{equation*}%
Note that 
\begin{equation*}
\left\langle \nabla \rho ,\nabla \sigma \right\rangle =\left\langle \nabla
\rho ,\nabla G\right\rangle -\left( \frac{a}{\rho ^{2}}+\frac{2b}{\rho ^{3}}%
\right) \left\vert \nabla \rho \right\vert ^{2}.
\end{equation*}
Using (\ref{d4}), we conclude that 
\begin{equation}
\frac{1}{2}\Delta G\geq \frac{1}{3}\left( G-1+\frac{a}{\rho }+\frac{b}{\rho
^{2}}\right) ^{2}+\left\langle \nabla G,\nabla \rho \right\rangle -\frac{4}{3%
}\left( \frac{a}{\rho ^{2}}+\frac{2b}{\rho ^{3}}\right) .  \label{d5}
\end{equation}

Now set 
\begin{equation*}
a=9\text{ \ and }b=2r_{0}^{2}.
\end{equation*}
Let $x_{0}\in \overline{B_{p}\left( R\right) }\setminus B_{p}\left(
r_{0}\right) $ be a maximum point of $G$ in $\overline{B_{p}\left( R\right) }%
\setminus B_{p}\left( r_{0}\right) .$ If $x_{0}\in \partial B_{p}\left(
r_{0}\right) $, then $G\left( x_{0}\right) <0$ by (\ref{d4}) and the choice of $b$.
If $x_{0}\in \partial B_{p}\left( R\right) $, then $G\left( x_{0}\right)
\leq 1+\frac{C}{\sqrt{R}}$ by (\ref{d1}).
Finally, assume that $x_{0}\in
B_{p}\left( R\right) \setminus \overline{B_{p}\left( r_{0}\right) }$ is an
interior point. Then $\Delta G\left( x_{0}\right) \leq 0$ and $\nabla
G\left( x_{0}\right) =0$. Hence, (\ref{d5}) implies that 
\begin{equation*}
\left( G-1+\frac{a}{\rho }+\frac{b}{\rho ^{2}}\right) ^{2}\leq 4\left( \frac{%
a}{\rho ^{2}}+\frac{2b}{\rho ^{3}}\right) \text{ \ \ at }x_{0}.
\end{equation*}%
Consequently, 
\begin{equation*}
G\leq 1-\frac{a}{\rho }-\frac{b}{\rho ^{2}}+2\left( \frac{\sqrt{a}}{\rho }+%
\frac{\sqrt{2b}}{\rho ^{\frac{3}{2}}}\right) \text{ \ \ at }x_{0}.
\end{equation*}%
Therefore,

\begin{equation*}
G\leq 1-\frac{a-2\sqrt{a}-2}{\rho }\text{ \ at }x_{0}
\end{equation*}
as
\begin{equation*}
\frac{2\sqrt{2b}}{\rho ^{\frac{3}{2}}}\leq \frac{b}{\rho ^{2}}+\frac{2}{\rho}.
\end{equation*}

Since $a=9,$ we see that in this case $G\left( x_{0}\right) \leq 1$. In
conclusion, we have proved that $G\leq 1+\frac{C}{\sqrt{R}}$ on 
$B_{p}\left( R\right) \setminus B_{p}\left(r_{0}\right).$ 
Taking $R\rightarrow \infty$ implies $G\leq 1$ on 
$M\setminus B_{p}\left( r_{0}\right).$ In view of (\ref{lb}), this immediately leads to

\begin{equation}
\left\vert \nabla \rho \right\vert ^{2}-1\leq \frac{10}{r}\text{ \ on }%
M\setminus B_{p}\left( R_{0}\right),  \label{d2}
\end{equation}%
where $R_{0}=5r_{0}^{2}.$ In particular, $|\nabla \rho|\leq 1+\frac{5}{r}$ on $M\setminus B_{p}\left( R_{0}\right).$ 
Integrating this estimate along minimizing geodesics, we conclude that

\begin{equation*}
\rho \leq r+6\ln r.
\end{equation*}%
Finally, (\ref{d2}) implies that 
\begin{equation*}
\left\vert \nabla \rho \right\vert ^{2}\left( 1-\frac{10}{r}\right) \leq
\left( 1+\frac{10}{r}\right) \left( 1-\frac{10}{r}\right) <1.
\end{equation*}%
This means that 
\begin{equation*}
\left\vert \nabla \rho \right\vert ^{2}-1\leq \frac{10}{r}\left\vert \nabla
\rho \right\vert ^{2}
\end{equation*}%
as claimed. The proposition is proved.
\end{proof}

We remark that the linear decay rate in the gradient estimate of Proposition \ref{rho} is sharp.
This is because $\rho =r+\ln r$ for $M=\mathbb{R}^{3}$ equipped with the Euclidean metric, 
since $\Delta h=h$ for $h=e^{-\left( r+\ln r\right) }.$ 

Another fact is that there exists a constant $a>0$ such that 
\begin{equation}
\Delta \rho ^{-a}\geq 0\text{ \ on }M\setminus B_{p}\left( R_{0}\right) .
\label{S}
\end{equation}%
Indeed, by Proposition \ref{rho} we have
\begin{eqnarray*}
\Delta \rho ^{-a} &=&-a\left( \Delta \rho \right) \rho ^{-a-1}+a\left(
a+1\right) \left\vert \nabla \rho \right\vert ^{2}\rho ^{-a-2} \\
&=&a\left( 1-\left\vert \nabla \rho \right\vert ^{2}\right) \rho
^{-a-1}+a\left( a+1\right) \left\vert \nabla \rho \right\vert ^{2}\rho
^{-a-2} \\
&\geq &-10a\left\vert \nabla \rho \right\vert ^{2}\rho ^{-a-2}+a\left(
a+1\right) \left\vert \nabla \rho \right\vert ^{2}\rho ^{-a-2}.
\end{eqnarray*}%
Hence, for $a=9$, we get  $\Delta \rho ^{-a}\geq 0$ as claimed.

Finally, by extending $\rho$ to $B_p(1)$ by setting $\rho=0$ there, the set $\{\rho<s\}$ is connected 
for every $s>0.$ Otherwise, there would exist a connected component $\Omega_s$ of $\{\rho<s\}$ 
with $\rho=s$ on the boundary $\partial \Omega_s$, and such that $\Omega_s \cap B_p(1)=\emptyset$.  Applying
the maximum principle to equation $\Delta \rho=-1+|\nabla \rho|^2$ at the minimum point of $\rho$ in $\Omega_s,$
one arrives at an obvious contradiction.

\section{Size of fingers}\label{3}

In this section, we consider level sets $\left\{ \rho=s\right\} $ 
of the function $\rho$ constructed in the previous section. For  $t_0= 2R_0$, with $R_0$ as in Proposition \ref{rho},  we have that  $\{\rho>t_0\}\subset M\setminus B_p(R_0).$

As $M$ has only one end, there is only one unbounded connected component of $M\setminus \{\rho\leq s\}.$ Denote it with
$M_{s}$ and let $\ell_{0}\left( s\right) $ be its boundary, i.e., 
\begin{equation}
\ell_{0}\left( s\right) :=\{\rho= s\} \cap \overline{M_{s}}=\partial M_{s}.
\label{a2}
\end{equation}%
Since the Ricci curvature of $M$ is nonnegative, by Liu's work \cite{Liu}, we may assume without loss of generality that  $M$ is diffeomorphic to $\mathbb{R}^3$.
Otherwise, \cite{Liu} implies that the universal cover of $M$ is isometric to $\mathbb{R}\times N$, for a complete surface $N$  of non-negative sectional curvature, and Theorem \ref{B} is valid in that case by Cohn-Vossen. 
Hence, according to \cite {LT, MW2}, $\ell_0(s)$ is connected for all $s>t_0.$ 
 
\begin{lemma}
\label{l} Let $\left( M^{3},g\right) $ be a complete noncompact three-dimensional  manifold with 
$\mathrm{Ric}\geq 0$. Then $\ell_{0}\left( s\right)$ is connected for any $s>t_0$.
\end{lemma}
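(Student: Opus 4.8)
The plan is to reduce the statement to pure topology, using two structural facts. First, as recalled just above the lemma, Liu's theorem \cite{Liu} lets us assume that $M$ is diffeomorphic to $\mathbb{R}^3$; in particular $H_1(M;\mathbb{Z})=0$. Second, I would use the connectedness of the sublevel sets of $\rho$: by the last paragraph of Section \ref{2}, after extending $\rho$ by $0$ on $B_p(1)$ the set $\{\rho<t\}$ is connected for every $t>0$, and $\rho$ has no interior local minimum on $M\setminus B_p(R_0)$ — at such a point $\nabla\rho=0$ would force $\Delta\rho=-1+|\nabla\rho|^2=-1<0$, contradicting the second derivative test. Since $\{\rho=s\}\subset M\setminus B_p(R_0)$ for $s>t_0$, this gives $\{\rho\le s\}=\overline{\{\rho<s\}}$, so $\{\rho\le s\}$ is compact and connected for every $s>t_0$.

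Now fix $s>t_0$ and set $D_s:=M\setminus M_s$, so that $M=\overline{M_s}\cup D_s$ and, by the definition \eqref{a2} of $\ell_0(s)$, $\overline{M_s}\cap D_s=\partial M_s=\ell_0(s)$. Both pieces are connected: $\overline{M_s}$ because $M_s$ is, and $D_s$ because it is the union of the connected set $\{\rho\le s\}$ with the closures of the bounded components of $\{\rho>s\}$ (the ``fingers''), each of which is connected and meets $\{\rho=s\}\subset\{\rho\le s\}$ along its nonempty boundary. The reduced Mayer--Vietoris sequence of the decomposition $M=\overline{M_s}\cup D_s$ then contains the exact piece
\[
H_1(M)\longrightarrow \widetilde H_0\big(\ell_0(s)\big)\longrightarrow \widetilde H_0(\overline{M_s})\oplus\widetilde H_0(D_s),
\]
whose two outer groups vanish — $H_1(M)=0$ since $M\cong\mathbb{R}^3$, and the last group vanishes by connectedness — so $\widetilde H_0(\ell_0(s))=0$, i.e. $\ell_0(s)$ is connected. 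For a regular value $s$ one can also see this concretely: $\{\rho=s\}$ is then a finite disjoint union of embedded closed surfaces, each orientable and therefore separating $M$ into a bounded and an unbounded region; there is a unique ``outermost'' one $F_0$, characterized by $\{\rho\le s\}$ lying in the closed region it bounds — two such surfaces would have to be nested, which is impossible — and then $M_s$ is the unbounded complement of $F_0$, so $\ell_0(s)=F_0$.

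The Mayer--Vietoris step is literally valid for a regular value $s$, where $\overline{M_s}$ and $D_s$ are manifolds glued along the closed surface $\ell_0(s)$; for general $s>t_0$ one invokes the \v{C}ech version of the sequence for the closed cover $\{\overline{M_s},D_s\}$ (or approximates by regular values from above), and the only inputs used — connectedness of $\overline{M_s}$, connectedness of $D_s$, and $H_1(M)=0$ — persist. The main obstacle is therefore not an estimate but securing these structural facts: that $M$ is diffeomorphic to $\mathbb{R}^3$, which is exactly Liu's theorem \cite{Liu}, and that the sublevel sets $\{\rho<t\}$ are connected, which is where the equation $\Delta\rho=-1+|\nabla\rho|^2$ of Proposition \ref{rho} enters. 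Granting these, the conclusion is the homological fact recorded in \cite{LT, MW2}.
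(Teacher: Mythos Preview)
Your argument is correct and follows the same strategy as the paper: reduce via Liu's theorem \cite{Liu} to $M\cong\mathbb{R}^3$ (hence $H_1(M)=0$), and then invoke the homological fact recorded in \cite{LT, MW2} that the boundary of the unbounded complementary component is connected. The only difference is that the paper simply cites \cite{LT, MW2} for this last step, whereas you supply a self-contained Mayer--Vietoris proof of it, together with the observation (used implicitly in the paper as well) that the sublevel sets $\{\rho\le s\}$ are connected because $\rho$ admits no interior local minimum.
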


Denote with
\begin{equation}\label{level}
L(t_0,s):=\{\rho<s\}\cap M_{t_0}\;\;\text{and}\;\;\ell(s):=\{\rho=s\}\cap M_{t_0}
\end{equation}
and let 
\begin{equation}
\ell_{1}\left( s\right) :=\ell\left( s\right) \setminus \ell_{0}\left( s\right)
=\ell\left( s\right) \setminus \overline{M_{s}}. \label{a3}
\end{equation}
Since $\{\rho<s\}$ is connected, it implies that $\ell_1(s)$ is the boundary of
bounded connected components of $M\setminus \{\rho<s\}$.  Note that $\ell(s)=\{\rho=s\}$ for $s$  large enough,  
but for $s$ close to $t_0$ there may exist connected components of $\{\rho=s\}$ that are not contained in $M_{t_0}$. 

We aim to establish the following proposition.

\begin{proposition}
\label{Finger_rho}Let $\left( M^{3},g\right) $ be a complete noncompact
three-dimensional manifold with $\mathrm{Ric}\geq 0$ on $M$ and scalar curvature $S\geq 1$ on $B_{p}\left( 2R\right).$ 
Then there exists a constant $C>0$ such that if $\ell_{1}\left( t\right) \neq \emptyset $ for some $t_{0}<t<2R,$ then 
\begin{equation*}
\ell_{1}\left( t\right) \subset M_{t-C\ln t}.
\end{equation*}
\end{proposition}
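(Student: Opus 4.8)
The plan is to argue that any ``finger'' --- a bounded connected component $\Omega$ of $M\setminus\{\rho<t\}$ whose boundary lies in $\ell_1(t)$ --- must be geometrically small, and then to convert that smallness into the inclusion $\ell_1(t)\subset M_{t-C\ln t}$. First I would fix such a component $\Omega$ and observe that on $\Omega$ we have $\rho\ge t$, with $\rho=t$ on $\partial\Omega$, so by the strong maximum principle applied to $\Delta\rho=-1+|\nabla\rho|^2$ at an interior minimum of $\rho$ one sees $|\nabla\rho|<1$ somewhere, but more usefully $\Omega$ is contained in $\{\rho\ge t\}\subset M\setminus B_p(R_0)$ since $t>t_0=2R_0$. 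The key point is that $\Omega$ carries scalar curvature $S\ge 1$ (as $\Omega\subset B_p(2R)$, which must be checked: since $\rho\le r+6\ln r$ and $\Omega\subset\{\rho<2R\}$ roughly, one gets $\Omega\subset B_p(2R)$ for $R$ large, using the linear lower bound $\rho\ge r-3\ln r$ to locate $\Omega$ inside a ball of controlled radius). With $S\ge 1$ and $\mathrm{Ric}\ge 0$ on the (compact, with boundary) region $\Omega$, I would invoke the $\mu$-bubble diameter estimate of \cite{CLS}: there is a universal bound on how ``deep'' such a region can extend, i.e. $\sup_{\Omega}\rho - t$ is bounded, but since the level sets of $\rho$ are spread out at unit speed one needs the bound in terms of $\ln t$, not a constant.

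The mechanism for getting the $\ln t$ is the superharmonic barrier $\rho^{-a}$ from \eqref{S}: for $a=9$ we have $\Delta\rho^{-a}\ge 0$ on $M\setminus B_p(R_0)\supset\Omega$, so $\rho^{-a}$ attains its minimum over $\overline\Omega$ on $\partial\Omega$, where $\rho=t$, giving $\rho\le t$ throughout $\overline\Omega$ --- which would be a contradiction unless $\Omega$ is empty. Wait: that cannot be right since fingers exist; so instead the barrier must be used the other way, or on a modified region. More carefully, I would use the $\mu$-bubble estimate to say that the ``width'' of $\Omega$ measured by the distance function $r$ is bounded by a universal constant $C$; that is, if $x\in\Omega$ then $r(x)\le \max_{\partial\Omega}r + C$. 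Combined with $\rho\ge r-3\ln r$ on $\partial\Omega$ (so $r\le \rho+3\ln r = t+3\ln r$ on $\partial\Omega$) and $\rho\le r+6\ln r$ on $\Omega$, one propagates: for $x\in\Omega$, $\rho(x)\le r(x)+6\ln r(x)\le (\max_{\partial\Omega}r+C)+6\ln r(x)\le t+3\ln r+C+6\ln r(x)\le t+C\ln t$, using that $r$ is comparable to $t$ on $\Omega$ (as $r\ge\rho-6\ln r\ge t-6\ln r$, hence $r\gtrsim t$, and $r\le t+C\ln t$ from the above). This shows $\Omega\subset\{\rho< t+C\ln t\}$. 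Re-reading the claimed conclusion $\ell_1(t)\subset M_{t-C\ln t}$: this says that points of $\ell_1(t)$ lie in the unbounded component of $M\setminus\{\rho\le t-C\ln t\}$, equivalently that the finger $\Omega$ ``started'' already at level $t-C\ln t$ from the main body --- so the genuine content is a \emph{lower} bound on how far back the finger reaches, obtained by running the same barrier/diameter argument to bound the $\rho$-width of $\Omega$ from above by $C\ln t$, i.e. $\Omega\subset\{t-C\ln t<\rho\}$ cannot hold; rather every $x\in\ell_1(t)$ is connected to $M_{t_0}$ through a region all of whose points have $\rho> $ some value, and the diameter bound forces that value to be $\ge t-C\ln t$.

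Concretely, the cleanest route: let $y\in\ell_1(t)$, so $y\in\ell(t)\setminus\overline{M_t}$, meaning $y$ lies on the boundary of a bounded component $\Omega$ of $M\setminus\{\rho<t\}$. I want to show $y\in M_{t-C\ln t}$, i.e. $y$ is in the unbounded component of $M\setminus\{\rho\le t-C\ln t\}$; since $\rho(y)=t>t-C\ln t$, $y\in M\setminus\{\rho\le t-C\ln t\}$, and I must rule out $y$ lying in a \emph{bounded} component of that set. If it did, $y$ would sit in a finger $\Omega'$ at level $t-C\ln t$ with $\Omega\subset\Omega'$, and by the width estimate above $\Omega'\subset\{\rho<(t-C\ln t)+C'\ln(t-C\ln t)\}$; choosing $C$ slightly larger than $C'$ gives $\Omega'\subset\{\rho<t\}$, contradicting $\rho(y)=t$. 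The main obstacle is the quantitative width estimate for a single finger: I expect to need the $\mu$-bubble diameter bound of \cite{CLS} (valid because $S\ge1$ and $\mathrm{Ric}\ge0$ on $\Omega$, with $\Omega$ compact and mean-convex boundary, since $\partial\Omega$ is a level set of $\rho$ with $\Delta\rho=-1+|\nabla\rho|^2\le 0$ pointing the right way near $\partial\Omega$) to control $\mathrm{diam}(\Omega)$ or $\max_\Omega r-\min_{\partial\Omega}r$ by a universal constant, and then the logarithmic slippage between $r$ and $\rho$ from Proposition \ref{rho} is what turns the constant into $C\ln t$. Verifying mean-convexity of $\ell_1(t)$ as the boundary of $\Omega$ (orientation of the normal, regularity of the level set for a.e.\ $t$) and checking the precise hypotheses of the $\mu$-bubble estimate apply to a region with boundary rather than a closed manifold is the delicate part; everything else is bookkeeping with the two-sided bound $r-3\ln r\le\rho\le r+6\ln r$.
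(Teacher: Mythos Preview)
There is a genuine gap at the heart of your plan: you assume that the $\mu$-bubble diameter estimate of \cite{CLS} yields a direct bound of the form $\mathrm{diam}(\Omega)\le C$ (or $\max_{\Omega}r-\min_{\partial\Omega}r\le C$) for a finger $\Omega$, but that is \emph{not} what the $\mu$-bubble provides. As recorded in Lemma~\ref{bubble}, the $\mu$-bubble gives only a connected separating surface $\Sigma_\tau\subset N_{\tau-L}\setminus N_{\tau-1}$ with $\mathrm{diam}(\Sigma_\tau)\le C_0$. This says nothing about the diameter or $r$-width of a bounded region with $S\ge 1$; indeed a product $S^2\times[0,L]$ has $\mathrm{Ric}\ge 0$, $S\equiv 2$, and arbitrarily large diameter, so some global input from the ambient noncompact manifold is essential. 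Your mean-convexity claim is also not right: $\Delta\rho=|\nabla\rho|^2-1$ is only bounded above by $10/r$ by Proposition~\ref{rho}, so it need not be nonpositive, and in any case the sign of $\Delta\rho$ is not the mean curvature of $\partial\Omega$.

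The paper fills exactly this gap by an Abresch--Gromoll excess function argument (Lemma~\ref{Finger}): for a point $z$ in an $r$-finger one first uses the separating $\mu$-bubble $\Sigma_\tau$ to bound $d(z,\gamma)$ to a ray $\gamma$, and then compares the excess $E(z)$ against the barrier $\varphi(d(z,\cdot))$ on a ball of that size to force the finger depth $T\le C/\sqrt{\tau}$. This is the step that converts ``small diameter of a separating slice'' into ``small depth of a finger'', and it is precisely what your proposal omits. Once Lemma~\ref{Finger} is in hand, the passage to $\rho$-level sets is indeed just the logarithmic bookkeeping $|r-\rho|\le C_0\ln r$ that you describe at the end; but the contradiction argument you outline still rests on the unproved width estimate, so it cannot close without the excess-function input. (Even in \cite{CLS} the annular confinement of fingers is obtained via Cheeger--Colding almost splitting, not from the $\mu$-bubble alone.)
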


To prepare for the proof, let us first recall the following result from  \cite{CLS}  (see also \cite{CL}). 
Here, $N_{t}$ is the unbounded connected component of $M\setminus \overline{B_{p}\left( t\right) }.$

\begin{lemma}
\label{bubble}Let $\left( M^{3},g\right) $ be a complete noncompact
three-dimensional manifold with scalar curvature $S\geq 1$ on $B_{p}\left(
2R\right).$ There exist universal constants $L, C_{0}>0$ and a connected smooth surface $\Sigma _{\tau}
\subset \left(N_{\tau-L}\setminus N_{\tau-1}\right)$ for each $\tau \in (2L,2R)$, 
such that $\Sigma _{\tau}$ separates $\partial N_{\tau-L}$ from $\partial
N_{\tau-1}$ and its diameter $\mathrm{diam}\left( \Sigma _{\tau}\right) \leq C_{0}.$
\end{lemma}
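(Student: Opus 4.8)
The plan is to realize $\Sigma_\tau$ as a connected component of a stable $\mu$-bubble in the shell between $\partial N_{\tau-L}$ and $\partial N_{\tau-1}$, and to convert its stability inequality — via the Gauss equation, the hypothesis $S\ge 1$, and the Gauss--Bonnet theorem — into a universal lower bound for $\lambda_{1}\bigl(-\Delta_{\Sigma_\tau}+\tfrac{1}{2}S_{\Sigma_\tau}\bigr)$, from which the area and diameter bounds follow. Fix a universal constant $L$, large enough for the estimates below. Since $t\mapsto N_{t}$ is nonincreasing we have $N_{\tau-1}\subset N_{\tau-L}$, so $\Omega_{\tau}:=N_{\tau-L}\setminus\overline{N_{\tau-1}}$ is a bounded open set whose boundary splits into the inner part $\partial N_{\tau-L}$ and the outer part $\partial N_{\tau-1}$; after reducing to $M$ diffeomorphic to $\mathbb{R}^{3}$ as in Section \ref{3}, we pass to the component of $\Omega_{\tau}$ meeting both. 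On $\Omega_{\tau}$ choose a smooth function $\rho_{0}$, comparable to the signed distance to $\{r=\tau-L/2\}$, with $|\nabla\rho_{0}|\le 1+\tfrac{1}{100}$, and with $\rho_{0}\to-\tfrac{L'}{2}$ along $\partial N_{\tau-L}$ and $\rho_{0}\to+\tfrac{L'}{2}$ along $\partial N_{\tau-1}$, for a universal $L'\in(2.1\pi,L)$. Set $h:=-\tan\!\bigl(\pi\rho_{0}/L'\bigr)$, so $h\to-\infty$ at $\partial N_{\tau-L}$, $h\to+\infty$ at $\partial N_{\tau-1}$, and $h'=-\tfrac{\pi}{L'}\bigl(1+h^{2}\bigr)$.

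Following the $\mu$-bubble method (cf.\ \cite{CLS}), minimize
\[
\mathcal{A}(\Omega')=\mathrm{A}\bigl(\partial^{*}\Omega'\cap\Omega_{\tau}\bigr)-\int_{\Omega'}h
\]
over finite-perimeter sets $\Omega'$ with $N_{\tau-1}\subset\Omega'$ and $\Omega'\setminus N_{\tau-1}\Subset\Omega_{\tau}$. The blow-up of $h$ at $\partial\Omega_{\tau}$ makes the infimum finite and attained, and forces the reduced boundary $\Sigma^{*}:=\partial^{*}\Omega^{*}\cap\Omega_{\tau}$ of a minimizer $\Omega^{*}$ to be a nonempty compact surface in the interior of $\Omega_{\tau}$, hence inside $N_{\tau-L}\setminus N_{\tau-1}$, separating $\partial N_{\tau-L}$ from $\partial N_{\tau-1}$. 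By the regularity theory for prescribed mean curvature minimizers in ambient dimension three, $\Sigma^{*}$ is smooth and embedded, with mean curvature $H=h$ with respect to a unit normal $\nu$.

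Since $\Sigma^{*}$ is closed and sits in the interior of $\Omega_{\tau}$, minimality of $\Omega^{*}$ gives the stability inequality: for every $\phi\in C^{\infty}(\Sigma^{*})$,
\[
\int_{\Sigma^{*}}|\nabla\phi|^{2}\ \ge\ \int_{\Sigma^{*}}\bigl(|A|^{2}+\mathrm{Ric}(\nu,\nu)+\langle\nabla h,\nu\rangle\bigr)\phi^{2}.
\]
Inserting the Gauss equation $\mathrm{Ric}(\nu,\nu)=\tfrac{1}{2}\bigl(S-S_{\Sigma}+H^{2}-|A|^{2}\bigr)$ with $H=h$, discarding $\tfrac{1}{2}|A|^{2}\ge 0$, using $S\ge 1$, and bounding $\langle\nabla h,\nu\rangle\ge-|h'|\,|\nabla\rho_{0}|\ge-\tfrac{\pi}{L'}\bigl(1+h^{2}\bigr)\bigl(1+\tfrac{1}{100}\bigr)$, one arrives at
\[
\int_{\Sigma^{*}}|\nabla\phi|^{2}+\tfrac{1}{2}S_{\Sigma}\,\phi^{2}\ \ge\ \epsilon_{0}\int_{\Sigma^{*}}\phi^{2},\qquad\epsilon_{0}:=\tfrac{1}{2}-\tfrac{\pi}{L'}\bigl(1+\tfrac{1}{100}\bigr)>0,
\]
where $S_{\Sigma}$ is the scalar curvature of $\Sigma^{*}$. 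On a connected component $\Sigma'$ the choice $\phi\equiv 1$ together with Gauss--Bonnet gives $\epsilon_{0}\,\mathrm{A}(\Sigma')\le\tfrac{1}{2}\int_{\Sigma'}S_{\Sigma'}=2\pi\chi(\Sigma')$, so $\chi(\Sigma')>0$, $\Sigma'$ is a $2$-sphere, and $\mathrm{A}(\Sigma')\le 4\pi/\epsilon_{0}$.

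For the diameter, the last display says $\lambda_{1}\bigl(-\Delta_{\Sigma'}+\tfrac{1}{2}S_{\Sigma'}\bigr)\ge\epsilon_{0}$ on each component; the two-dimensional diameter estimate then yields $\mathrm{diam}(\Sigma')\le C_{0}$ for a universal $C_{0}$ — along a minimizing geodesic of length $d$ in $\Sigma'$ one tests the inequality with a suitable cut-off of the arclength parameter and controls $\int S_{\Sigma'}\phi^{2}$ by Gauss--Bonnet on geodesic disks, which forces $d\le C\,\epsilon_{0}^{-1/2}$. Finally, by minimality any component of $\Sigma^{*}$ bounding a region disjoint from $\partial\Omega_{\tau}$ may be discarded, and a connectedness argument in the shell $\Omega_{\tau}$ then produces a single connected component $\Sigma_{\tau}$ of what remains that still separates $\partial N_{\tau-L}$ from $\partial N_{\tau-1}$; this $\Sigma_{\tau}$ is the desired surface. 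I expect the main obstacle to be the two-dimensional diameter estimate: promoting the spectral lower bound $\lambda_{1}(-\Delta+\tfrac{1}{2}S_{\Sigma})\ge\epsilon_{0}$ to a genuine bound on $\mathrm{diam}(\Sigma')$ is the delicate point — it is the two-dimensional incarnation of the principle that positive scalar curvature controls geometry — together with the topological bookkeeping ensuring that a single connected component of the $\mu$-bubble already separates the two ends of the shell.
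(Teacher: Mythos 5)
The paper does not prove this lemma: it is explicitly \emph{recalled} from Chodosh--Li--Stryker \cite{CLS} (see also \cite{CL}), with only the one-line remark that $\Sigma_\tau$ is constructed by minimizing a suitable functional and is a $\mu$-bubble. Your blind reconstruction follows precisely that $\mu$-bubble route: blow-up prescribing function $h$ in the shell, existence and regularity of the minimizer, stability inequality, Gauss equation with $H=h$, $S\ge 1$, and Gauss--Bonnet, all exactly as in \cite{CLS}. The arithmetic in your stability computation checks out (with $L'>2.1\pi$ both the coefficient of $h^2$ and the constant $\epsilon_0=\tfrac12-\tfrac{\pi}{L'}(1+\tfrac1{100})$ are positive), and the conclusion that each component is a sphere of area $\le 4\pi/\epsilon_0$ is correct.

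Two points deserve flagging, one of which you already anticipate. First, the diameter estimate: passing from $\lambda_1\bigl(-\Delta_{\Sigma'}+\tfrac12 S_{\Sigma'}\bigr)\ge\epsilon_0$ to $\mathrm{diam}(\Sigma')\le C\epsilon_0^{-1/2}$ is indeed the nontrivial analytic input. Your sketch (radial cut-off along a diameter-realizing geodesic, coarea, and the differential Gauss--Bonnet relation $L'(t)+\int_{B_t}K=2\pi\chi(B_t)$) is in the right direction, but controlling the sign of the $\chi(B_t)$ term and getting a clean contradiction requires more care than the one-line sketch suggests; the references handle this via a second, one-dimension-lower argument on $\Sigma'$ (or a conformal change using the first eigenfunction), and you correctly single it out as the delicate step. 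Second, the final ``connectedness argument'' that extracts a single separating component is stated rather than proved; it does go through, but one needs the reduction to $M\cong\mathbb R^3$ (which you invoke) together with Alexander's theorem to see that each component of $\Sigma^*$ is a sphere bounding a ball that either contains the inner boundary (hence separates) or is a discardable ``bubble'' inside the shell, and that not all components can be of the latter type. Neither of these is a wrong step, but both are gaps a complete writeup would have to fill, and they are exactly the parts \cite{CLS} and \cite{CL} devote the most technical attention to.
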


Such a surface $\Sigma _{\tau}$ is constructed by minimizing a suitable functional and is commonly referred to as a $\mu$-bubble, see \cite{G}.
As in \cite{CLS}, we use this diameter estimate to bound the size of $N_{\tau}\setminus N_{\tau+L}$, but with a different approach.

We may assume that $R_0>5(L+C_0).$ We first establish a result analogous to Proposition \ref{Finger_rho} for geodesic spheres. 
\begin{lemma}
\label{Finger}Let $\left( M^{3},g\right) $ be a complete noncompact
three-dimensional manifold with $\mathrm{Ric}\geq 0$ on $M$ and scalar curvature $S\geq 1$ on $B_{p}\left( 2R\right).$ 
Then there exists a constant $C>0$ such that
\begin{equation*}
\left( \partial B_{p}\left( t\right)\cap N_{R_0}\right) \setminus N_{t}\subset N_{t-\frac{C}{\sqrt{t}}}
\end{equation*}
whenever $\left(\partial B_{p}\left( t\right)\cap N_{R_0}\right) \setminus N_{t}\neq \emptyset$ for 
$R_{0}<t<2R.$
\end{lemma}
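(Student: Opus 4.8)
The plan is to use the $\mu$-bubble diameter estimate (Lemma \ref{bubble}) to control how far a bounded component of $M\setminus\overline{B_p(t)}$ can extend inward. Suppose $x\in\left(\partial B_p(t)\cap N_{R_0}\right)\setminus N_t$; then $x$ lies on the boundary of a \emph{bounded} connected component $\Omega$ of $M\setminus\overline{B_p(t)}$ (a ``finger''). Our goal is to show $\Omega\subset N_{t-\frac{C}{\sqrt t}}$, i.e.\ that $\Omega$ does not reach the geodesic sphere $\partial B_p(t-\frac{C}{\sqrt t})$. First I would record the elementary geometric fact that since $r(x)=t$ and $x$ lies in a bounded component of the complement of $\overline{B_p(t)}$, any point $y\in\Omega$ is joined to $x$ by a path staying in $\Omega\subset M\setminus\overline{B_p(t)}$; hence if $\Omega$ were to meet $N_{t-\frac{C}{\sqrt t}}$, one would produce points of $\Omega$ arbitrarily close (in $r$) to $t$ on one side while $\Omega$ itself is ``trapped'' behind the sphere $\partial B_p(t)$. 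The contradiction will come by inserting a $\mu$-bubble between the two relevant geodesic spheres and using that it is connected with small diameter, so it cannot simultaneously separate $\partial N_{\tau-L}$ from $\partial N_{\tau-1}$ and be consistent with the finger threading past it.

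More concretely, the key steps, in order, are: (1) Set $\tau=t+L$, so that by Lemma \ref{bubble} (applicable since $t<2R$ and, after enlarging $R_0$, $\tau\in(2L,2R)$) there is a connected surface $\Sigma_\tau\subset N_{t}\setminus N_{t+L-1}$ with $\mathrm{diam}(\Sigma_\tau)\le C_0$ separating $\partial N_{t}$ from $\partial N_{t+L-1}$. Wait—this places $\Sigma_\tau$ \emph{outside} $B_p(t)$, which is the wrong side; instead I would take $\tau$ slightly less than $t$, say $\tau=t$, giving $\Sigma:=\Sigma_{t}\subset N_{t-L}\setminus N_{t-1}$, a connected surface of diameter $\le C_0$ separating $\partial N_{t-L}$ from $\partial N_{t-1}$. (2) Observe that $\Sigma$ has diameter $\le C_0$, hence $\Sigma\subset B_x(C_0+L)$ for the basepoint $x$ (since $\Sigma$ lies within distance $L$ of $\partial B_p(t)$ measured along $r$, and combining with $\mathrm{diam}(\Sigma)\le C_0$ and volume comparison gives extrinsic diameter control). (3) The finger $\Omega$ containing $x$, being a connected set in $M\setminus\overline{B_p(t)}$, is disjoint from $B_p(t)$ and in particular from the region $N_{t-1}\setminus N_t$ on the near side; if $\Omega$ reached into $N_{t-\frac{C}{\sqrt t}}$ it would have to cross $\Sigma$, because $\Sigma$ separates $\partial N_{t-L}$ from $\partial N_{t-1}$ and $\Omega$ connects a point with $r=t$ to a point with $r\le t-\frac{C}{\sqrt t}$—but $\Omega\cap B_p(t)=\emptyset$ forces any such crossing point of $\Sigma$ to satisfy $r\ge t$, contradicting $\Sigma\subset N_{t-L}\setminus N_{t-1}$ once $\frac{C}{\sqrt t}$ is chosen larger than the relevant gap. (4) Here is where the precise rate $\frac{C}{\sqrt t}$ enters: the improved gradient-type estimate available under $\mathrm{Ric}\ge0$ combined with the scale of the $\mu$-bubble (diameter $C_0$, thickness $L$, both universal) translates into an inward penetration bound proportional to $\frac{1}{\sqrt t}$ after rescaling; I would make this quantitative by comparing the intrinsic diameter $C_0$ of $\Sigma$ against the distance distortion of $r$ near $\partial B_p(t)$, which at radius $\sim t$ under nonnegative Ricci curvature is controlled by $1+O(1/\sqrt t)$ via the Laplacian comparison $\Delta r\le \frac{2}{r}$, whence a surface of extrinsic size $O(1)$ at radius $t$ can only straddle an $r$-interval of length $O(1/\sqrt t)\cdot(\text{something})$—more carefully, the correct bookkeeping gives the stated $\frac{C}{\sqrt t}$.

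The main obstacle I anticipate is step (3)–(4): cleanly turning ``the $\mu$-bubble separates two geodesic spheres and has bounded diameter'' into ``a bounded component of $M\setminus\overline{B_p(t)}$ cannot penetrate more than $\frac{C}{\sqrt t}$ inward.'' The separation statement in Lemma \ref{bubble} is about separating $\partial N_{\tau-L}$ from $\partial N_{\tau-1}$ \emph{within} the exterior region $N_{\tau-L}$, so one must carefully argue that a finger attached at radius $t$ either stays in a thin shell or is forced to cross $\Sigma$, and that crossing $\Sigma$ is impossible because $\Sigma$ lies at radius $\ge t-1>t-\frac{C}{\sqrt t}$ while the finger, being outside $\overline{B_p(t)}$, meets $\Sigma$ only where $r\ge t$; reconciling these requires also knowing that the finger is on the correct side of $\Sigma$, i.e.\ that $\partial B_p(t)\cap N_{R_0}$ is itself ``behind'' $\Sigma$. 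I would handle this by choosing the $\mu$-bubble parameter $\tau$ so that $\Sigma$ sits strictly between $\partial B_p(t-\frac{C}{\sqrt t})$ and $\partial B_p(t)$, invoking connectedness of $\Sigma$ and of the bounded finger, and then the penetration bound follows from the Laplacian comparison estimate for $r$ at scale $t$, exactly as in the geodesic-sphere shell estimates of \cite{CLS}. The rest—verifying $R_0>5(L+C_0)$ suffices, and the nonemptiness hypothesis is used only to guarantee a finger exists—is routine.
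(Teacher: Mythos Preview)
There is a genuine gap. First, you misread the target statement: $(\partial B_p(t)\cap N_{R_0})\setminus N_t\subset N_{t-C/\sqrt t}$ asks that every boundary point of a bounded finger of $M\setminus\overline{B_p(t)}$ already lies in the \emph{unbounded} component of $M\setminus\overline{B_p(t-C/\sqrt t)}$. This is a statement about which connected component the finger belongs to at a slightly smaller radius, not about whether $\Omega$ touches the sphere $\partial B_p(t-C/\sqrt t)$; since $\Omega\subset\{r>t\}$, the latter is automatic. Your steps (3)--(4) are aimed at this trivial direction, and the ``crossing $\Sigma$'' argument collapses once one notices that $\Omega$ never enters $\{r\le t\}$ at all.

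More importantly, the rate $C/\sqrt t$ cannot be extracted from the $\mu$-bubble and Laplacian comparison in the manner you sketch. In the paper the $\mu$-bubble $\Sigma_\tau$ is used for a single purpose: since it separates $p$ from a finger point $z$, meets a fixed geodesic ray $\gamma$ from $p$, and has diameter $\le C_0$, it yields $d(z,\gamma)\le T+L+C_0$. The square-root decay then comes from the Abresch--Gromoll excess argument, which your proposal omits entirely: with $q=\gamma(3\tau)$ and $E(x)=d(p,x)+d(q,x)-d(p,q)$, one checks directly that $E(z)\ge 2T$ for $z\in\Gamma_T$, while $\Delta E\le 4/\tau$ combined with a radial comparison function $\varphi$ on $B_z(\lambda)$ (where $E$ vanishes somewhere, thanks precisely to the distance bound to $\gamma$) forces $E(z)\lesssim T+\lambda^3/(\tau T)$. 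Equating the two bounds gives $T\le C/\sqrt\tau$. Your step (4) invokes ``distance distortion of $r$'' and $\Delta r\le 2/r$, but no such heuristic produces the $t^{-1/2}$ rate; without the excess-function maximum-principle step, the $\mu$-bubble alone (as in \cite{CLS}, via almost splitting) gives only a fixed-width annulus, not one shrinking with $t$.
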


\begin{proof}
We first claim that if there exists $T_0>0$ such that  both
\begin{equation*}
\Gamma _{0}:=\left( \partial B_{p}\left( \tau\right) \cap N_{\tau-1}\right)
\setminus N_{\tau}\neq \emptyset
\end{equation*}
and
\begin{equation}
\Gamma _{T_{0}}:=\left( \partial B_{p}\left( \tau+T_{0}\right) \cap
N_{\tau-1}\right) \setminus N_{\tau}\neq \emptyset \label{n2}
\end{equation}
for some $R_0\leq \tau<2R$, then  
\begin{equation}\label{T0bound}
T_{0}\leq \frac{C}{\sqrt{\tau}}.
\end{equation}

Indeed, let $\gamma :\left[ 0,\infty \right) \rightarrow M$ be a normal geodesic ray
starting at $\gamma \left( 0\right) =p$ and let $q:=\gamma \left( 3\tau\right).$
As in Abresch-Gromoll \cite{AG}, define the excess function by 
\begin{equation*}
E\left( x\right) :=d\left( p,x\right) +d\left( q,x\right) -d\left(p,q\right).
\end{equation*}%
The triangle inequality implies that $E\geq 0$ on $M$ and $E\left( \gamma
\left( t\right) \right) =0$ for all $t<3\tau.$ As $\mathrm{Ric}\geq 0,$
the Laplacian comparison theorem implies that 
\begin{equation}
\Delta E\left( x\right) \leq \frac{3}{\tau-C}  \label{n1}
\end{equation}%
in the weak sense for any $x\in N_{\tau-C}\setminus N_{\tau+C}.$ Observe that for any $0<T<\min \left\{ 1,T_{0}\right\},$
\begin{equation*}
\Gamma _{T}:=\left( \partial B_{p}\left( \tau+T\right) \cap N_{\tau-1}\right)
\setminus N_{\tau}\neq \emptyset
\end{equation*}%
and that $\Gamma _{T}$ separates $\Gamma _{0}$ from $\Gamma _{T_{0}}.$ 
So for $z\in \Gamma _{T}$, we have $d\left( z,\Gamma _{0}\right) =T$. In fact,
this distance is realized by the segment in $M\setminus B_{p}\left( \tau\right)$ of
a minimizing geodesic from $p$ to $z.$ 
Moreover, as $q\in N_{\tau},$ 
we have 
\begin{eqnarray*}
d\left( z,q\right)  &\geq &d\left( z,\Gamma _{0}\right) +d\left( q,\partial
N_{\tau}\right)  \\
&=&T+2\tau.
\end{eqnarray*}%
Indeed, the second line is because $\gamma $ is a distance minimizing path 
from $q=\gamma(3\tau)$ to $\partial N_{\tau}\subset \partial B_{p}\left( \tau\right).$ 
Since $d\left( p,z\right) =\tau+T$ and $d\left( p,q\right) =3\tau,$ this implies
that 
\begin{equation}
E\left( z\right) \geq 2T  \label{n3}
\end{equation}%
for any $z\in \Gamma _{T}.$ We now estimate $d\left( z,\gamma \right),$
which is where the $\mu $-bubble diameter estimate Lemma \ref{bubble} is used. 

First, note that 
\begin{equation}
d\left( z,\gamma \right) \geq d\left( z,\Gamma _{0}\right) =T.
\label{n30}
\end{equation}
For an upper bound of $d\left( z,\gamma \right),$ recall that by Lemma \ref%
{bubble} there exists a separating connected surface $\Sigma _{\tau}\subset
N_{\tau-L}\setminus N_{\tau-1}$ with $\mathrm{diam}\left( \Sigma _{\tau}\right) \leq C_{0}.$ 
For any $z\in \Gamma _{T}\subset N_{\tau-1}$ it follows that $\Sigma_\tau$ separates $p$ and $z$ and  $\Sigma_\tau$ intersects $\gamma$, therefore
\begin{equation}
d\left( z,\gamma \right) \leq d\left( z,\Sigma _{\tau}\right) +\mathrm{diam}\left(
\Sigma _{\tau}\right).  \label{n4}
\end{equation}%
Moreover, any minimizing geodesic from $p$
to $z$ must intersect $\Sigma _{\tau}$ at a point $z^{\prime }\in \Sigma
_{\tau}$ and $d\left( p,z^{\prime }\right) \geq \tau-L.$ Therefore, 
\begin{eqnarray*}
d\left( z,\Sigma _{\tau}\right)  &\leq &d\left( z,z^{\prime }\right)  \\
&=&d\left( p,z\right) -d\left( p,z^{\prime }\right)  \\
&\leq &T+L.
\end{eqnarray*}%
It follows by (\ref{n4}) that 
$$
d\left( z,\gamma \right) \leq T+L+C_{0}.
$$
Together with (\ref{n30}), we conclude that
\begin{equation}
T\leq d\left( z,\gamma \right) \leq T+L+C_{0}.  \label{n40}
\end{equation}%
Denote with 
\begin{equation}
\lambda :=T+L+C_{0}+1.  \label{n5}
\end{equation}%
Since $E\left( \gamma \right) =0,$ by (\ref{n40}) there
exists $y\in B_{z}\left( \lambda -1\right) \setminus B_{z}\left( T\right) $
such that 
\begin{equation}
E\left( y\right) =0.  \label{E=0}
\end{equation}
Moreover, since $T<1$, we have that $B_z(\lambda)\subset N_{\tau-\lambda}\setminus N_{\tau+\lambda+1}$.
From  (\ref{n1}) and the fact that $\tau\geq R_0>4\lambda$ we obtain  that 
\begin{equation}
\Delta E\leq \frac{4}{\tau}\text{ \ on }B_{z}\left( \lambda \right).
\label{n6}
\end{equation}

Following Abresch-Gromoll \cite{AG}, define the function 
\begin{equation*}
\varphi \left( d\right) :=\frac{1}{3}\frac{\lambda ^{3}}{d}+\frac{1}{6}d^{2}-%
\frac{1}{2}\lambda ^{2}.
\end{equation*}
It satisfies the following properties for all $d\in \left( 0,\lambda\right)$:
\begin{eqnarray*}
\varphi \left( \lambda \right)  &=&0,\text{ } \\
\text{ }\varphi \left( d\right)  &>&0, \\
\varphi ^{\prime }\left( d\right)  &\leq &0, \\
\varphi ^{\prime \prime }\left( d\right) +\frac{2}{d}\varphi ^{\prime
}\left( d\right)  &=&1.
\end{eqnarray*}%
We view $\varphi $ as a function on $B_{z}\left( \lambda \right) $ by 
$\varphi \left( x\right) =\varphi \left( d\left( z,x\right) \right).$ The
Laplacian comparison theorem implies that 
\begin{eqnarray}
\Delta \varphi  &=&\varphi ^{\prime }\Delta d+\varphi ^{\prime \prime
}\left\vert \nabla d\right\vert ^{2}  \label{n7} \\
&\geq &\varphi ^{\prime \prime }\left( d\right) +\frac{2}{d}\varphi ^{\prime
}\left( d\right)   \notag \\
&=&1  \notag
\end{eqnarray} on $B_z(\lambda).$ 
Consequently, by (\ref{n6}) and (\ref{n7}), the function 
\begin{equation*}
w\left( x\right) :=E\left( x\right) -\frac{4}{\tau}\varphi \left( x\right) 
\end{equation*}%
satisfies $\Delta w\leq 0$ on $B_{z}\left( \lambda \right).$ By the maximum
principle, on $B_{z}\left( \lambda \right) \setminus B_{z}\left( 
\frac{T}{2}\right),$ $w$ achieves its minimum at some $x_{0}\in
\partial B_{z}\left( \frac{T}{2}\right) \cup \partial B_{z}\left( \lambda
\right).$ According to (\ref{E=0}), there exists $y\in B_{z}\left( \lambda -1\right)
\setminus B_{z}\left( T\right) $ such that $w\left( y\right) <0.$ Therefore,
$w\left( x_{0}\right) <0$ as well. Since $w\geq 0$ on 
$\partial B_{z}\left( \lambda\right) $ by $\varphi \left(\lambda \right) =0,$ 
we conclude that $x_{0}\in \partial B_{z}\left( \frac{T}{2}\right).$ 

As $\varphi \left(
x_{0}\right) =\frac{2}{3}\frac{\lambda ^{3}}{T}+\frac{1}{24}T^{2}-\frac{1}{2}%
\lambda ^{2}$, this means that 
\begin{equation*}
E\left( x_{0}\right) <\frac{1}{\tau}\left( \frac{8}{3}\frac{\lambda ^{3}}{T}+%
\frac{1}{6}T^{2}-2\lambda ^{2}\right) .
\end{equation*}%
Since $\left\vert \nabla E\right\vert \leq 2$, it follows that 
\begin{eqnarray*}
E\left( z\right)  &\leq &E\left( x_{0}\right) +T \\
&<&T+\frac{1}{\tau}\left( \frac{8}{3}\frac{\lambda ^{3}}{T}+\frac{1}{6}%
T^{2}-2\lambda ^{2}\right) .
\end{eqnarray*}%
Together with (\ref{n3}), this implies that 
\begin{equation}
T<\frac{1}{\tau}\left( \frac{8}{3}\frac{\lambda ^{3}}{T}+\frac{1}{6}%
T^{2}-2\lambda ^{2}\right) ,  \label{n8}
\end{equation}%
where $\lambda $ is given by (\ref{n5}). Note that (\ref{n8}) holds for any 
$0<T<\min \left\{ 1,T_{0}\right\}.$ This immediately implies that 
$T_{0}\leq $ $\frac{2\lambda ^{2}}{\sqrt{\tau}}$ as claimed in (\ref{T0bound}).

To complete the proof, now assume
\begin{equation*}
\left( \partial B_{p}\left( t\right)\cap N_{R_0}\right) \setminus N_{t}\neq \emptyset 
\end{equation*}
for some $R_{0}<t<2R.$ 
Let 
\begin{equation*}
\tau:=\sup \left\{ s\; \big|\; s <t\text{ \ and }\left(\partial B_{p}\left( t\right)\cap N_{R_0}\right)
\setminus N_{t}\subset N_{s }\right\}.
\end{equation*}
Then  $\tau\geq R_0$ and $\left(\partial B_{p}\left( t\right)\cap N_{R_0}\right) \setminus N_{t}$ is no longer a subset of $N_{\tau}.$ In particular, as $\left(\partial B_p(t)\cap N_{R_0}\right)\subset N_{\tau-1}$, we have $\left(\partial B_p(t)\cap N_{\tau-1} \right)\setminus N_{\tau}\neq \emptyset$.  
To see that $\left(\partial B_p(\tau)\cap N_{\tau-1} \right)\setminus N_{\tau}\neq \emptyset$ as well,
note that by the definition of $\tau,$ there exists a finger $F$ of $M\setminus B_p(\tau)$ 
such that the intersection $F\cap \left(M\setminus B_p(t)\right)$ is a finger of $M\setminus B_p(t).$  
This implies that $F\subset N_{\tau-1}$ and $\left(\partial B_p(\tau)\cap N_{\tau-1} \right)\setminus N_{\tau}\neq \emptyset.$
So according to the claim, we must have $t-\tau\leq \frac{C}{\sqrt{\tau}}.$ Equivalently, $\tau\geq t-\frac{C}{\sqrt{t}}.$ Therefore 
\begin{equation*}
\left( \partial B_{p}\left( t\right) \cap N_{R_0} \right)\setminus N_{t} \subset N_{t-\frac{C}{\sqrt{t}}}.
\end{equation*}
This proves the result.
\end{proof}

In \cite{CLS},  
based on the Cheeger-Colding almost splitting theorem, it was shown that fingers must 
lie between annuli of fixed size. Our approach here is different and seems to yield a somewhat better estimate.
We now prove Proposition {\ref {Finger_rho}.

\begin{proof}[Proof of Proposition \protect\ref{Finger_rho}]
Lemma \ref{rho} implies that there exists constant $C_{0}>0$ such that 
\begin{eqnarray}
r-C_{0}\ln r \leq \rho \leq r+C_{0}\ln r,  \label{n10} \\
\rho -C_{0}\ln \rho  \leq r\leq \rho +C_{0}\ln \rho .  \notag
\end{eqnarray}%
Therefore,  
\begin{equation}
N_{t-2C_{0}\ln t}\subset M_{t-3C_{0}\ln t} \;\;\;\;\text{and}\;\;\; M_{t_0}\subset N_{R_0}.  \label{n11}
\end{equation}%
Assume by contradiction that $\ell_{1}\left( t\right) $ is not a subset of $%
M_{t-3C_{0}\ln t}$. Then (\ref{n11}) implies that $\ell_{1}\left( t\right) $ is
not a subset of $N_{t-2C_{0}\ln t}$ either as it lies in bounded connected components 
of $M\setminus L\left( t_0,t-3C_{0}\ln t\right).$ Let $\eta $ be a minimizing geodesic from $p$ to $z\in
\ell_{1}\left( t\right).$ By (\ref{n10}) we have that $r\left( z\right) \geq
t-C_{0}\ln t.$ Therefore, $\eta $ intersects $\partial B_{p}\left( t-C_{0}\ln
t\right).$ It follows that for  $s:=t-C_{0}\ln t,$ the set $\left(\partial
B_{p}\left( s\right)\cap N_{R_0}\right) \setminus N_{s}$ is not empty and it is not a subset of $N_{s-C_{0}\ln s}.$  
This contradicts with Lemma \ref{Finger}.
\end{proof}

\section{Proof of the main result}\label{4}

With the preparations in the previous sections, we are now ready to prove Theorem \ref{B}. 
We continue to adopt the same notations as before.

\begin{lemma}
\label{Ric}Let $\left( M^{3},g\right) $ be a three-dimensional complete
Riemannian manifold with $\mathrm{Ric}\geq 0.$ Then on each regular 
level set $\ell(s)$ of $\rho$ with $s>t_{0},$
 we have 
\begin{equation*}
\int_{\ell_{0}\left( s\right) }\left( \left\vert \nabla ^{2}\rho \right\vert
^{2}-\left\vert \nabla \left\vert \nabla \rho \right\vert \right\vert ^{2}+%
\mathrm{Ric}\left( \nabla \rho ,\nabla \rho \right) \right) \left\vert
\nabla \rho \right\vert ^{-2}\geq \frac{1}{2}\int_{\ell_{0}\left( s\right)
}S-4\pi. 
\end{equation*}%
\end{lemma}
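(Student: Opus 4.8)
The plan is to apply Lemma \ref{RicS} with $u=\rho$ on the connected surface $\ell_0(s)$, integrate, and use Gauss--Bonnet. First I would rewrite the integrand using the identity of Lemma \ref{RicS}, which gives
\begin{equation*}
\left(\left\vert\nabla^2\rho\right\vert^2-\left\vert\nabla\left\vert\nabla\rho\right\vert\right\vert^2+\mathrm{Ric}(\nabla\rho,\nabla\rho)\right)\left\vert\nabla\rho\right\vert^{-2}
= \left\vert\nabla^2\rho\right\vert^2\left\vert\nabla\rho\right\vert^{-2} - \left\vert\nabla\left\vert\nabla\rho\right\vert\right\vert^2\left\vert\nabla\rho\right\vert^{-2} + \frac12 S - \frac12 S_s + \frac12\left\vert\nabla\rho\right\vert^{-2}\left(\left\vert\nabla\left\vert\nabla\rho\right\vert\right\vert^2-\left\vert\nabla^2\rho\right\vert^2\right) + \frac12\left\vert\nabla\rho\right\vert^{-2}\left((\Delta\rho)^2 - 2\tfrac{\langle\nabla\left\vert\nabla\rho\right\vert,\nabla\rho\rangle}{\left\vert\nabla\rho\right\vert}\Delta\rho + \left\vert\nabla\left\vert\nabla\rho\right\vert\right\vert^2\right).
\end{equation*}
Combining terms, the $\left\vert\nabla^2\rho\right\vert^2$ contributions partially cancel and what remains is $\frac12\left\vert\nabla^2\rho\right\vert^2\left\vert\nabla\rho\right\vert^{-2}$ together with a full square: the last parenthesis plus the leftover $-\frac12\left\vert\nabla\left\vert\nabla\rho\right\vert\right\vert^2$ (from the $-\left\vert\nabla\left\vert\nabla\rho\right\vert\right\vert^2+\frac12\left\vert\nabla\left\vert\nabla\rho\right\vert\right\vert^2+\frac12\left\vert\nabla\left\vert\nabla\rho\right\vert\right\vert^2$ bookkeeping) assembles into $\frac12\left\vert\nabla\rho\right\vert^{-2}\left(\Delta\rho - \tfrac{\langle\nabla\left\vert\nabla\rho\right\vert,\nabla\rho\rangle}{\left\vert\nabla\rho\right\vert}\right)^2$, which is nonnegative. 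Hence the integrand is bounded below by $\frac12 S - \frac12 S_s$ pointwise on the regular level set, plus the manifestly nonnegative term $\frac12\left\vert\nabla^2\rho\right\vert^2\left\vert\nabla\rho\right\vert^{-2}\ge 0$ that I simply discard.

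Integrating this inequality over $\ell_0(s)$ gives
\begin{equation*}
\int_{\ell_0(s)}\left(\left\vert\nabla^2\rho\right\vert^2-\left\vert\nabla\left\vert\nabla\rho\right\vert\right\vert^2+\mathrm{Ric}(\nabla\rho,\nabla\rho)\right)\left\vert\nabla\rho\right\vert^{-2} \ge \frac12\int_{\ell_0(s)}S - \frac12\int_{\ell_0(s)}S_s.
\end{equation*}
Now I would invoke that $\ell_0(s)$ is connected (Lemma \ref{l}) and is a closed surface (as the boundary of the smooth region $M_s$, for $s$ a regular value), so by Gauss--Bonnet $\int_{\ell_0(s)}S_s = 2\int_{\ell_0(s)}K_{\ell_0(s)} = 4\pi\chi(\ell_0(s)) \le 8\pi$, the last inequality because a connected closed surface has Euler characteristic at most $2$. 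Substituting $\int_{\ell_0(s)}S_s \le 8\pi$ yields $\int_{\ell_0(s)}(\cdots)\left\vert\nabla\rho\right\vert^{-2} \ge \frac12\int_{\ell_0(s)}S - 4\pi$, which is the claim.

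The main point requiring care — and the only real obstacle — is the transition from "the full identity holds on regular level sets" to "Gauss--Bonnet applies to $\ell_0(s)$": one needs $\ell_0(s)$ to be not just a regular level set of $\rho$ but a genuine embedded closed surface so that the scalar curvature $S_s$ of $\{\rho=s\}$ restricted to $\ell_0(s)$ integrates to $4\pi\chi$. Since $s>t_0$ is assumed to be a regular value and $\ell_0(s)=\partial M_s$ with $M_s$ the unbounded component of $M\setminus\{\rho\le s\}$, this is exactly the setting of Lemma \ref{l}, which guarantees connectedness; orientability and closedness follow from $\ell_0(s)$ being the smooth boundary of an open region in the orientable manifold $M$ (diffeomorphic to $\mathbb{R}^3$). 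With these structural facts in hand the rest is the pointwise algebraic inequality above, which I would present compactly by identifying the leftover terms as the nonnegative square, without expanding every cross term.
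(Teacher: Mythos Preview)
Your approach matches the paper's exactly: invoke Lemma~\ref{RicS}, drop the nonnegative remainder to obtain the pointwise bound $\ge \tfrac12 S - \tfrac12 S_s$, integrate over $\ell_0(s)$, and apply Gauss--Bonnet using the connectedness supplied by Lemma~\ref{l}.

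There is, however, one algebraic slip worth correcting. After combining terms the three $|\nabla|\nabla\rho||^2$ contributions cancel completely (as your own parenthetical bookkeeping $-1+\tfrac12+\tfrac12=0$ shows), so beyond $\tfrac12 S-\tfrac12 S_s$ and $\tfrac12|\nabla^2\rho|^2|\nabla\rho|^{-2}$ what remains is
\[
\tfrac12|\nabla\rho|^{-2}\bigl((\Delta\rho)^2-2b\,\Delta\rho\bigr)
=\tfrac12|\nabla\rho|^{-2}(\Delta\rho-b)^2-\tfrac12|\nabla\rho|^{-2}b^2,
\qquad b:=\frac{\langle\nabla|\nabla\rho|,\nabla\rho\rangle}{|\nabla\rho|},
\]
not the clean square $\tfrac12|\nabla\rho|^{-2}(\Delta\rho-b)^2$ you claim. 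The inequality is still salvageable since $b^2\le|\nabla^2\rho|^2$, so $\tfrac12(|\nabla^2\rho|^2-b^2)|\nabla\rho|^{-2}\ge 0$ absorbs the defect; but the cleaner grouping---and what the paper does implicitly---is to keep the two nonnegative pieces as $\tfrac12(|\nabla^2\rho|^2-|\nabla|\nabla\rho||^2)|\nabla\rho|^{-2}\ge 0$ (Kato's inequality) and $\tfrac12|\nabla\rho|^{-2}\bigl((\Delta\rho)^2-2b\,\Delta\rho+|\nabla|\nabla\rho||^2\bigr)\ge\tfrac12|\nabla\rho|^{-2}(\Delta\rho-b)^2\ge 0$, and drop both.
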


\begin{proof}
According to Lemma \ref{RicS}, 

\begin{eqnarray*}
\mathrm{Ric}\left( \nabla \rho ,\nabla \rho \right) \left\vert \nabla \rho
\right\vert ^{-2} &=&\frac{1}{2}S-\frac{1}{2}\bar{S}+\frac{1}{2}\frac{1}{%
\left\vert \nabla \rho \right\vert ^{2}}\left( \left\vert \nabla \left\vert
\nabla \rho \right\vert \right\vert ^{2}-\left\vert \nabla ^{2}\rho
\right\vert ^{2}\right) \\
&&+\frac{1}{2}\frac{1}{\left\vert \nabla \rho \right\vert ^{2}}\left( \left(
\Delta \rho \right) ^{2}-2\frac{\left\langle \nabla \left\vert \nabla \rho
\right\vert ,\nabla \rho \right\rangle }{\left\vert \nabla \rho \right\vert }%
\Delta \rho +\left\vert \nabla \left\vert \nabla \rho \right\vert
\right\vert ^{2}\right) ,
\end{eqnarray*}%
where $\bar{S}$ denotes the scalar curvature of the surface $\{\rho =s\}.$
Therefore, 
\begin{equation*}
\mathrm{Ric}\left( \nabla \rho ,\nabla \rho \right) \left\vert \nabla \rho
\right\vert ^{-2}\geq \frac{1}{2}S-\frac{1}{2}\bar{S}+\frac{1}{2}\frac{1}{%
\left\vert \nabla \rho \right\vert ^{2}}\left( \left\vert \nabla \left\vert
\nabla \rho \right\vert \right\vert ^{2}-\left\vert \nabla ^{2}\rho
\right\vert ^{2}\right) .
\end{equation*}%
Consequently, for any $s>t_0$, 
\begin{eqnarray*}
&&\int_{\ell_{0}\left( s\right) }\left( \left\vert \nabla ^{2}\rho \right\vert
^{2}-\left\vert \nabla \left\vert \nabla \rho \right\vert \right\vert ^{2}+%
\mathrm{Ric}\left( \nabla \rho ,\nabla \rho \right) \right) \left\vert
\nabla \rho \right\vert ^{-2} \\
&\geq &\frac{1}{2}\int_{\ell_{0}\left( s\right) }S-\frac{1}{2}\int_{\ell_{0}\left(
s\right) }\bar{S} \\
&\geq&\frac{1}{2}\int_{\ell_{0}\left( s\right) }S-4\pi .
\end{eqnarray*}%
The last line is due to the Gauss-Bonnet theorem and Lemma \ref{l}.
\end{proof}

To estimate the integral on $\ell_{1}\left( s\right),$ we denote with 
\begin{equation}\label{omega}
\omega(t):=\int_{1}^{t}\mathrm{V}\left( B_{p}\left(
r\right) \right) \frac{\ln r}{r^{2}}dr.
\end{equation}
Note that by volume comparison estimates we have 
$$
\int_t^{2t} \mathrm{V}\left( B_{p}\left(
r\right) \right) \frac{\ln r}{r^{2}}dr\leq C\int_{\frac{t}{2}}^t \mathrm{V}\left( B_{p}\left(
r\right) \right) \frac{\ln r}{r^{2}}dr.
$$
Therefore, it follows that 
\begin{equation}\label{omega2t}
\omega(2t)\leq C\omega (t).
\end{equation} 

\begin{lemma}
\label{l1}Let $\left( M^{3},g\right) $ be a complete noncompact three
dimensional manifold with $\mathrm{Ric}\geq 0$ and scalar curvature $S\geq 1$ 
on $B_{p}\left( 2R\right).$ Then there exists  a constant $C>0$ such that

\begin{equation*}
\int_{t_0}^{t}\left( \int_{\ell_{1}\left( s\right) }\left\vert \nabla \rho
\right\vert \right) ds\leq C\omega(t)
\end{equation*}
for all $t_{0}<t<2R.$
\end{lemma}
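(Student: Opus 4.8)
The plan is to rewrite the inner integral via the divergence theorem, to localize each finger inside a small geodesic ball by means of the $\mu$-bubble estimate, and then to integrate in $s$.

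For $s>t_0$ write $\mathcal G(s)$ for the union of the fingers at level $s$ — the bounded components of $M\setminus\{\rho<s\}$ whose boundaries make up $\ell_1(s)$ — each of which is compact with boundary in $\{\rho=s\}$. Integrating $\Delta\rho=-1+|\nabla\rho|^2$ over $\mathcal G(s)$ and using that the outward unit normal of $\mathcal G(s)$ along $\ell_1(s)$ is $-\nabla\rho/|\nabla\rho|$ gives the identity
$$\int_{\ell_1(s)}|\nabla\rho|=\int_{\mathcal G(s)}\bigl(1-|\nabla\rho|^2\bigr).$$
The right-hand side is genuinely small: the negative part of $1-|\nabla\rho|^2$ is at most $10/r$ by Proposition \ref{rho}, while on the fingers $|\nabla\rho|$ is forced close to $1$ because $\sigma:=|\nabla\rho|^2$ is a subsolution of $\tfrac12\Delta\sigma-\langle\nabla\rho,\nabla\sigma\rangle\ge\tfrac13(\sigma-1)^2$ — the inequality underlying the gradient estimate in Proposition \ref{rho} — which prevents $\sigma$ from lingering below $1$. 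Equivalently, by the co-area formula $\int_{t_0}^{t}\bigl(\int_{\ell_1(s)}|\nabla\rho|\bigr)\,ds=\int_{\mathcal F}|\nabla\rho|^2$, where $\mathcal F$ is the region of points $x$ with $t_0<\rho(x)<t$ lying in a finger at level $\rho(x)$, using that a point in a finger at one level lies in a finger at every level up to its own.

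To make this quantitative one localizes the fingers. Proposition \ref{Finger_rho}, together with the comparisons $r-C_0\ln r\le\rho\le r+C_0\ln r$, shows that whenever $\ell_1(s)\neq\emptyset$ every finger at level $s$ is a bounded subset of $\{s\le\rho\le s+C\ln s\}$, hence lies in $N_{s-C'\ln s}$ and so is separated from $p$ by the connected $\mu$-bubble $\Sigma_\tau\subset N_{\tau-L}\setminus N_{\tau-1}$ of Lemma \ref{bubble} with $\tau$ chosen comparable to $s$. Repeating the Abresch--Gromoll estimate from the proof of Lemma \ref{Finger} shows each such finger lies within geodesic distance $C\ln s$ of $\Sigma_\tau$, and since $\mathrm{diam}\,\Sigma_\tau\le C_0$ this confines all of $\mathcal G(s)$ to a single geodesic ball $B_{x_s}(C\ln s)$ with $x_s\in\Sigma_\tau$ and $r(x_s)\in[s-C\ln s,\,s]$. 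Feeding this confinement and the localized linear volume growth of \cite{MW1,Wa} (valid since $S\ge1$ on $B_p(2R)$) into the two estimates above produces a pointwise-in-$s$ bound of the form $\int_{\ell_1(s)}|\nabla\rho|\le C\,\mathrm V(B_p(s))\,\ln s/s^2$. Integrating in $s$ and using $\omega(2t)\le C\omega(t)$ then gives $\int_{t_0}^{t}\bigl(\int_{\ell_1(s)}|\nabla\rho|\bigr)\,ds\le C\omega(t)$.

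The main obstacle is exactly this last quantitative estimate. The $\mu$-bubble localization on its own only places the level-$s$ fingers in a ball of radius $\sim\ln s$, and the crude volume bound coming from that is far too large to integrate over $s\in(t_0,t)$; recovering the correct power of $s$ — and hence the weight $\ln s/s^2$ appearing in $\omega$ — requires genuinely exploiting that $\rho$ is distance-like, i.e. that $|\nabla\rho|$ is driven back toward $1$ by its own differential inequality, so that $1-|\nabla\rho|^2$ integrates to something of size only $\sim\ln s/s$ over $\mathcal G(s)$. Getting this bookkeeping right, with the correct interplay between the Bochner-type inequality for $|\nabla\rho|^2$, the $\mu$-bubble confinement, and relative volume comparison, is the technical heart of the lemma.
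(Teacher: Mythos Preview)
Your identity $\int_{\ell_1(s)}|\nabla\rho|=\int_{\mathcal G(s)}(1-|\nabla\rho|^2)$ is correct, and the confinement $\mathcal G(s)\subset B_{x_s}(C\ln s)$ can indeed be justified from Proposition~\ref{Finger_rho} together with the $\mu$-bubble. But, as you yourself acknowledge in the final paragraph, the crude volume bound this produces is far too weak to integrate, and the step you label ``the technical heart'' --- forcing $1-|\nabla\rho|^2$ to integrate to only $O(\ln s/s)$ over $\mathcal G(s)$ --- is never actually carried out. This is a genuine gap, not missing bookkeeping: the inequality $\tfrac12\Delta\sigma\ge\tfrac13(\sigma-1)^2+\langle\nabla\rho,\nabla\sigma\rangle$ was used in Proposition~\ref{rho} via the maximum principle to bound $\sigma=|\nabla\rho|^2$ from \emph{above}, and it gives no lower bound. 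In fact $\nabla\rho$ must vanish at the interior maximum of $\rho$ on every finger, so $1-|\nabla\rho|^2$ is not pointwise small on $\mathcal G(s)$; any argument would have to be genuinely integral, and none is supplied. Even granting the localized linear volume growth, the best your outline yields is $\int_{\ell_1(s)}|\nabla\rho|\le \mathrm V(\mathcal G(s))\le C\ln s$, which integrates to $Ct\ln t$, not $C\omega(t)$.

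The paper's proof avoids this difficulty by an entirely different mechanism. It exploits the fact (recorded just after Proposition~\ref{rho}) that $\Delta\rho^{-a}\ge 0$ for $a=9$, a consequence of the sharp one-sided gradient bound $|\nabla\rho|^2-1\le \tfrac{10}{r}|\nabla\rho|^2$. Integrating this subharmonicity over the region $M_{s-C\ln s}\cap L(t_0,s)$ --- whose boundary, thanks to Proposition~\ref{Finger_rho}, is precisely $\ell_0(s-C\ln s)\cup\ell_0(s)\cup\ell_1(s)$ --- yields
\[
\int_{\ell_1(s)}|\nabla\rho|\;\le\;\int_{\ell_0(s-C\ln s)}|\nabla\rho|\;-\;\int_{\ell_0(s)}|\nabla\rho|\;+\;\frac{C\ln s}{s}\int_{\ell_0(s-C\ln s)}|\nabla\rho|.
\]
After integrating in $s$ and substituting $r=s-C\ln s$, the first two terms on the right telescope; the remainder is controlled by $\int_{t_0}^{t}\tfrac{\ln s}{s}\bigl(\int_{\ell(s)}|\nabla\rho|\bigr)\,ds$, which the co-area formula and volume comparison bound by $C\omega(t)$. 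No lower bound on $|\nabla\rho|$ is ever invoked.
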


\begin{proof}
In view of (\ref{omega2t}), we may assume that $t>2t_0$. For $t_0<s<t,$ let $M_{s}$ be the unbounded connected component of 
$M\setminus \overline{L\left( t_0,s\right) }.$ If $\ell_{1}\left( s\right)\neq \emptyset,$ 
then Proposition \ref{Finger_rho} implies that 
\begin{equation*}
\partial \left( M_{s-C\ln s}\cap L(t_0,s) \right)
=\ell_{0}\left( s-C\ln s\right) \cup \ell_{0}\left( s\right) \cup \ell_{1}\left(s\right).
\end{equation*}
According to (\ref{S}), there exists $a>0$ such that $\Delta \rho ^{-a}\geq 0.$ It follows that

\begin{eqnarray*}
0 &\leq &\int_{M_{s-C\ln s}\cap L(t_0,s) }\Delta \rho ^{-a} \\
&=&\int_{\ell_{0}\left( s\right) }\left( \rho ^{-a}\right) _{\nu
}+\int_{\ell_{1}\left( s\right) }\left( \rho ^{-a}\right) _{\nu
}-\int_{\ell_{0}\left( s-C\ln s\right) }\left( \rho ^{-a}\right) _{\nu },
\end{eqnarray*}%
where $\nu =\frac{\nabla \rho }{\left\vert \nabla \rho \right\vert }.$
Consequently, 
\begin{equation*}
\frac{1}{s^{a+1}}\int_{\ell_{1}\left( s\right) }\left\vert \nabla \rho
\right\vert \leq \frac{1}{\left( s-C\ln s\right) ^{a+1}}\int_{\ell_{0}\left(
s-C\ln s\right) }\left\vert \nabla \rho \right\vert -\frac{1}{s^{a+1}}%
\int_{\ell_{0}\left( s\right) }\left\vert \nabla \rho \right\vert.
\end{equation*}%
It follows that 
\begin{eqnarray}
\int_{\ell_{1}\left( s\right) }\left\vert \nabla \rho \right\vert  &\leq
&\int_{\ell_{0}\left( s-C\ln s\right) }\left\vert \nabla \rho \right\vert
-\int_{\ell_{0}\left( s\right) }\left\vert \nabla \rho \right\vert   \label{a8}
\\
&&+\frac{C\ln s}{s}\int_{\ell_{0}\left( s-C\ln s\right) }\left\vert \nabla \rho
\right\vert.  \notag
\end{eqnarray}
Integrating (\ref{a8}) with respect to $s$ from $t_{0}$ to $t$ we obtain that 
\begin{eqnarray}
\;\;\;\;\;\;\;\;\;\;\int_{t_{0}}^{t}\left( \int_{\ell_{1}\left( s\right) }\left\vert \nabla \rho
\right\vert \right) ds &\leq &\int_{t_{0}}^{t}\left( \int_{\ell_{0}\left(
s-C\ln s\right) }\left\vert \nabla \rho \right\vert \right)
ds-\int_{t_{0}}^{t}\left( \int_{\ell_{0}\left( s\right) }\left\vert \nabla \rho
\right\vert \right) ds  \label{a9} \\
&&+C\int_{t_{0}}^{t}\frac{\ln s}{s}\left( \int_{\ell_{0}\left( s-C\ln s\right)
}\left\vert \nabla \rho \right\vert \right) ds.  \notag
\end{eqnarray}%
Note that by making the substitution $r=s-C\ln s$ we get%
\begin{eqnarray*}
\int_{t_{0}}^{t}\left( \int_{\ell_{0}\left( s-C\ln s\right) }\left\vert \nabla
\rho \right\vert \right) \left( 1-\frac{C}{s}\right) ds &=&\int_{t_{0}-C\ln
t_{0}}^{t-C\ln t}\left( \int_{\ell_{0}\left( r\right) }\left\vert \nabla \rho
\right\vert \right) dr \\
&\leq &\int_{t_{0}}^{t}\left( \int_{\ell_{0}\left( r\right) }\left\vert \nabla
\rho \right\vert \right) dr\\
&&+\int_{t_{0}-C\ln
t_{0}}^{t_0}\left( \int_{\ell_{0}\left( r\right) }\left\vert \nabla \rho
\right\vert \right) dr.
\end{eqnarray*}
The second integral on the right side can be estimated by using Proposition \ref{rho} and the volume comparison theorem,
\begin{eqnarray*}
\int_{t_{0}-C\ln
t_{0}}^{t_0}\left( \int_{\ell_{0}\left( r\right) }\left\vert \nabla \rho
\right\vert \right) dr&\leq& C\mathrm{V}\left( B_p(t_0+C\ln t_0)\setminus B_p(t_0-C\ln t_0)\right)\\
&\leq& C\mathrm{V}\left(B_p(t_0)\right)\frac{\ln t_0}{t_0} \\ 
&\leq & C \omega(t)
\end{eqnarray*}
for any $t>2t_0.$ Hence, we get that
\begin{eqnarray*}
\int_{t_{0}}^{t}\left( \int_{\ell_{0}\left( s-C\ln s\right) }\left\vert \nabla
\rho \right\vert \right) ds &\leq &\int_{t_{0}}^{t}\left( \int_{\ell_{0}\left(
s\right) }\left\vert \nabla \rho \right\vert \right) ds+C\omega(t) \\
&&+C\int_{t_{0}}^{t}\frac{1}{s}\left( \int_{\ell_{0}\left( s-C\ln s\right)
}\left\vert \nabla \rho \right\vert \right) ds.
\end{eqnarray*}%
It follows from (\ref{a9}) that 
\begin{equation*}
\int_{t_{0}}^{t}\left( \int_{\ell_{1}\left( s\right) }\left\vert \nabla \rho
\right\vert \right) ds\leq C\omega(t)+C\int_{t_{0}}^{t}\frac{\ln s}{s}\left(
\int_{\ell_{0}\left( s-C\ln s\right) }\left\vert \nabla \rho \right\vert
\right) ds.
\end{equation*}%
Moreover, using the same substitution as above, we have
\begin{eqnarray*}
\int_{t_{0}}^{t}\frac{\ln s}{s}\left( \int_{\ell_{0}\left( s-C\ln s\right)
}\left\vert \nabla \rho \right\vert \right) ds &\leq &\int_{t_{0}}^{t}\frac{%
\ln s}{s}\left( \int_{\ell(s-C\ln s)}\left\vert \nabla \rho
\right\vert \right) ds \\
&\leq &C\omega(t)+C\int_{t_{0}}^{t-C\ln t}\frac{\ln r}{r}\left( \int_{\ell(r)}\left\vert \nabla \rho \right\vert \right) dr \\
&=&C\omega(t)+C\int_{L(t_0, t-C\ln t)}\frac{\ln \rho }{\rho }%
\left\vert \nabla \rho \right\vert ^{2}.
\end{eqnarray*}%
Therefore, in view of Lemma \ref{rho}, we get that 
\begin{equation}
\int_{t_{0}}^{t}\left( \int_{\ell_{1}\left( s\right) }\left\vert \nabla \rho
\right\vert \right) ds\leq C\omega(t)+C\int_{B_{p}\left( t\right)\setminus B_p(1) }\frac{\ln \rho }{%
\rho }.  \label{a10}
\end{equation}%
As $\mathrm{Ric}\geq 0,$ 
\begin{eqnarray*}
\int_{B_{p}\left( t\right)\setminus B_p(1) }\frac{\ln \rho }{\rho } &=&\int_{1}^{t}%
\frac{\ln r}{r}\mathrm{A}\left( \partial B_{p}\left( r\right) \right) dr \\
&\leq &C\int_{1}^{t}\mathrm{V}\left( B_{p}\left( r\right) \right) \frac{%
\ln r}{r^{2}}dr.
\end{eqnarray*}
In conclusion, (\ref{a10}) implies that 
\begin{equation*}
\int_{t_{0}}^{t}\left( \int_{\ell_{1}\left( s\right) }\left\vert \nabla \rho
\right\vert \right) ds\leq C\omega(t)
\end{equation*}
for all $2t_0<t<2R.$ The lemma is proved.
\end{proof}

\begin{lemma}
\label{Area}Let $\left( M^3,g\right) $ be a complete noncompact
three dimensional manifold with $\mathrm{Ric}\geq 0.$ Then there exists constant
$C>0$ such that
 
\begin{equation*}
\int_{t_0}^{t}\frac{1}{s}\mathrm{A}\left( \left\{ \rho =s\right\} \right)
ds\leq C \omega(t)
\end{equation*}%
for all $t>2t_{0}.$
\end{lemma}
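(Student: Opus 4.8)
The plan is to apply the coarea formula twice: first to $\rho$, to convert the area integral into a volume integral of $\rho^{-1}$, and then to the distance function $r$, to compare that volume integral with $\omega(t)$ using the linear bounds for $\rho$ from Proposition \ref{rho} together with the Bishop--Gromov volume comparison. No geometry beyond what is already available in Sections \ref{2} and \ref{3} is needed.

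First I would observe that, since $\{\rho>t_0\}\subset M\setminus B_p(R_0)$, every level set $\{\rho=s\}$ with $s>t_0$ lies in $M\setminus B_p(R_0)$, where Proposition \ref{rho} gives $|\nabla\rho|\leq 1+\frac{5}{r}\leq 2$, hence $\frac{1}{|\nabla\rho|}\geq\frac12$. Consequently $\mathrm{A}(\{\rho=s\})=\int_{\{\rho=s\}}1\leq 2\int_{\{\rho=s\}}\frac{1}{|\nabla\rho|}$ for a.e.\ $s\in(t_0,t)$, and the coarea formula applied to $\rho$ on $\{t_0<\rho<t\}$ with the function $1/\rho$ yields
\begin{equation*}
\int_{t_0}^{t}\frac1s\,\mathrm{A}(\{\rho=s\})\,ds\;\leq\;2\int_{t_0}^{t}\frac1s\left(\int_{\{\rho=s\}}\frac{1}{|\nabla\rho|}\right)ds\;=\;2\int_{\{t_0<\rho<t\}}\frac1\rho .
\end{equation*}

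Next I would bound the right-hand side by an integral over a geodesic ball. By Proposition \ref{rho}, on $M\setminus B_p(R_0)$ we have $\rho\geq r-3\ln r$ and $r\leq\rho+C\ln\rho$, so for $t>2t_0$ (using that $R_0$ is a large universal constant) one gets $\rho\geq \frac{r}{2}$, hence $\{t_0<\rho<t\}\subset B_p(2t)\setminus B_p(R_0)$ and $\frac1\rho\leq\frac2r$ on this set. Applying the coarea formula to $r$, the integral is dominated by $C\int_{R_0}^{2t}\frac1r\,\mathrm{A}(\partial B_p(r))\,dr$. Since $\mathrm{Ric}\geq 0$, Bishop--Gromov makes $r^{-2}\mathrm{A}(\partial B_p(r))$ nonincreasing, and integrating this from $0$ to $r$ gives $\mathrm{A}(\partial B_p(r))\leq\frac3r\mathrm{V}(B_p(r))$; together with $\ln r\geq 1$ on $[R_0,2t]$ and the doubling estimate (\ref{omega2t}) this produces
\begin{equation*}
\int_{t_0}^{t}\frac1s\,\mathrm{A}(\{\rho=s\})\,ds\;\leq\;C\int_{R_0}^{2t}\frac{\mathrm{V}(B_p(r))}{r^2}\,dr\;\leq\;C\int_{1}^{2t}\mathrm{V}(B_p(r))\frac{\ln r}{r^2}\,dr\;=\;C\,\omega(2t)\;\leq\;C\,\omega(t)
\end{equation*}
for all $t>2t_0$, which is the claim. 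The only point requiring a bit of care — rather than any genuine obstacle — is keeping the domain of integration outside $B_p(R_0)$, where $\rho$ is comparable to $r$ and $|\nabla\rho|$ is bounded; the rest is a routine application of the coarea formula and volume comparison.
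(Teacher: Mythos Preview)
Your proof is correct and follows essentially the same strategy as the paper's: apply the coarea formula to convert the level-set area integral into a volume integral, use the comparison $\rho\sim r$ from Proposition~\ref{rho}, and finish with Bishop--Gromov. Your route is in fact slightly cleaner---the paper discretizes $[t_0,t]$ into unit intervals and uses an annulus volume comparison $\mathrm{V}(B_p(k+C\ln k)\setminus B_p(k-C\ln k))\leq C\,\mathrm{V}(B_p(k)\setminus B_p(k-1))\ln k$ to generate the $\ln s$ factor, whereas you pass directly to $\int_{\{t_0<\rho<t\}}\frac{1}{\rho}$, bound it by $\int\frac{1}{r}$, and insert the logarithm at the end via $\ln r\geq 1$ on $[R_0,2t]$.
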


\begin{proof}
We may assume that $t_0, t \in \mathbb{N}$. Then 
\begin{eqnarray*}
\int_{t_0}^{t}\frac{1}{s}\mathrm{A}\left( \left\{ \rho =s\right\} \right) ds
&=&\sum_{k=t_0+1}^{ t}\int_{k-1}^{k}\frac{1}{s}\mathrm{A}\left( \left\{
\rho =s\right\} \right) ds \\
&\leq &C\sum_{k=t_0+1}^{ t}\frac{1}{k}\int_{k-1}^{k}\mathrm{A}\left(
\left\{ \rho =s\right\} \right) ds \\
&=&C\sum_{k=t_0+1}^{ t}\frac{1}{k}\int_{\left\{ k-1<\rho <k\right\}
}\left\vert \nabla \rho \right\vert .
\end{eqnarray*}%
Proposition \ref{rho} implies that 
\begin{equation*}
\left\{ k-1<\rho <k\right\} \subset B_{p}\left( k+C\ln k\right) \setminus
B_{p}\left( k-C\ln k\right) .
\end{equation*}%
By the volume comparison theorem, 
\begin{equation*}
\mathrm{V}\left( B_{p}\left( k+C\ln k\right) \setminus B_{p}\left( k-C\ln
k\right) \right) \leq C\mathrm{V}\left( B_{p}\left( k\right) \setminus
B_{p}\left( k-1\right) \right) \ln k.
\end{equation*}%
Therefore, 
\begin{eqnarray*}
\int_{t_0}^{t}\frac{1}{s}\mathrm{A}\left( \left\{ \rho =s\right\} \right)
ds &\leq &C\sum_{k=t_0+1}^{ t}\frac{\ln k}{k}\mathrm{V}\left( B_{p}\left(
k\right) \setminus B_{p}\left( k-1\right) \right)  \\
&\leq &C\int_{t_{0}}^{t}\mathrm{A}\left( \partial B_{p}\left( s\right)
\right) \frac{\ln s}{s}ds \\
&\leq &C\int_{t_{0}}^{t}\mathrm{V}\left( B_{p}\left( s\right) \right) \frac{%
\ln s}{s^{2}}ds.
\end{eqnarray*}
\end{proof}

Let us denote with 
\begin{equation}\label{Ct0}
\beta:=\max\left\{-  \int_{\ell(t_0)}\vert \nabla \rho\vert_{\nu}\,, \int_{\ell(t_0)}\vert\nabla \rho\vert \right\}.
\end{equation}
We note that  
 \begin{equation}\label{Ct0 bound}
 \beta\leq C \mathrm{V}(M\setminus M_{t_0}),
 \end{equation}
where $M_{t_0}$ denotes the only unbounded connected component of $M\setminus \{\rho \leq t_0\}$. 

 To see this, let $\phi (t)$ be a smooth cut-off function  such that 
$\phi(t)=0$ for $t\leq t_0-1$ and $\phi(t)=1$ for $t\geq t_0$.  The function $\phi(x):=\phi(\rho(x))$ satisfies 
\begin{equation}\label{A1}
|\nabla \phi |+|\Delta \phi |\leq C. 
\end{equation}
 
Then
\begin{eqnarray}\label{A2}
\int_{\ell(t_0)}|\nabla \rho |=\int_{M\setminus M_{t_0}}\left( \phi^2\Delta \rho+\left<\nabla \rho,\nabla \phi^2\right>\right )\leq C\mathrm{V}(M\setminus M_{t_0}).
\end{eqnarray}
 Similarly, we have 
 \begin{eqnarray*}
\int_{\ell(t_0)}\vert \nabla \rho\vert_{\nu}=\int_{M\setminus M_{t_0}}\left(\phi^2 \Delta \left\vert \nabla \rho \right\vert +
\left<\nabla \vert \nabla \rho\vert, \nabla \phi^2\right> \right )
\end{eqnarray*}
The Bochner formula for $\rho$ satisfying $\Delta \rho=\vert \nabla \rho\vert^2 -1$ implies that 
 \begin{equation*}
\Delta \left\vert \nabla \rho \right\vert \geq 2\left\langle \nabla \vert \nabla \rho\vert ,\nabla \rho \right\rangle,
\end{equation*}
therefore
  \begin{eqnarray*}
-\int_{\ell(t_0)}\vert \nabla \rho\vert_{\nu}\leq -2\int_{M\setminus M_{t_0}}\phi^2 \left\langle \nabla \vert \nabla \rho\vert ,\nabla \rho \right\rangle - \int_{M\setminus M_{t_0}} \left<\nabla \vert \nabla \rho\vert, \nabla \phi^2\right> .
\end{eqnarray*}
Integrating by parts the terms on the right side and using (\ref{A1}) and (\ref{A2}) we obtain that  
$$-\int_{\ell(t_0)}\vert \nabla \rho\vert_{\nu} \leq C\mathrm{V}(M\setminus M_{t_0}).$$

We now come to the proof of Theorem \ref{B} which is restated below. 

\begin{theorem}
\label{B1}Let $\left( M^3,g\right) $ be a complete noncompact three-dimensional
manifold with $\mathrm{Ric}\geq 0.$ Assume moreover that the scalar curvature $S$ of
$\left( M,g\right) $ satisfies
\begin{equation*}
1\leq S\leq K\text{ \ on }B_{p}\left( 2R\right) 
\end{equation*}%
for some constant $K>1.$ Then 
\begin{equation*}
\int_{B_{p}\left( R\right) }S\leq 8\pi R+CK\omega(R)+CK\mathrm{V}(M\setminus M_{t_0}).
\end{equation*}%
for all $R>2$, where $C>0$ is some universal constant and $\omega$ is given by (\ref{omega}). 
\end{theorem}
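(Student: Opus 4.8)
The strategy is to integrate the Bochner formula for $\rho$ over the region $L(t_0,R) = \{\rho < R\}\cap M_{t_0}$ and track every boundary term carefully. Starting from
$$
\Delta|\nabla\rho| = \bigl(|\nabla^2\rho|^2 - |\nabla|\nabla\rho||^2 + \mathrm{Ric}(\nabla\rho,\nabla\rho)\bigr)|\nabla\rho|^{-1} + \langle\nabla\rho,\nabla\Delta\rho\rangle|\nabla\rho|^{-1},
$$
and using $\Delta\rho = -1 + |\nabla\rho|^2$ (so $\langle\nabla\rho,\nabla\Delta\rho\rangle = \langle\nabla\rho,\nabla|\nabla\rho|^2\rangle$), I would integrate over $L(t_0,R)$ and apply the divergence theorem on the left. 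The boundary of $L(t_0,R)$ consists of $\ell_0(t_0)\cup\ell_1(t_0) = \ell(t_0)$ (inner, with outward normal $-\nu$) and $\ell_0(R)\cup\ell_1(R) = \ell(R)$ (outer). This yields, after dividing by $|\nabla\rho|$ and invoking the co-area formula, an identity of the form
$$
\int_{t_0}^{R}\!\!\int_{\ell(s)}\bigl(|\nabla^2\rho|^2 - |\nabla|\nabla\rho||^2 + \mathrm{Ric}(\nabla\rho,\nabla\rho)\bigr)|\nabla\rho|^{-2}\,ds = \int_{\ell(R)}|\nabla\rho|_\nu - \int_{\ell(t_0)}|\nabla\rho|_\nu - \text{(lower-order)},
$$
where the lower-order term comes from $\langle\nabla\rho,\nabla|\nabla\rho|^2\rangle|\nabla\rho|^{-1}$ and is controlled using the gradient estimate $|\nabla\rho|^2 - 1 \le 10/r$ from Proposition \ref{rho}; its contribution is bounded by $C\int_{t_0}^R\frac{1}{s}\mathrm{A}(\{\rho=s\})\,ds \le C\omega(R)$ via Lemma \ref{Area}.

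Next I would split the integrand on $\ell(s) = \ell_0(s)\cup\ell_1(s)$. On $\ell_0(s)$, Lemma \ref{Ric} gives the lower bound $\frac{1}{2}\int_{\ell_0(s)}S - 4\pi$, where the $4\pi$ comes from Gauss–Bonnet applied to the connected surface $\ell_0(s)$ (connectedness from Lemma \ref{l}). On $\ell_1(s)$, I discard $|\nabla^2\rho|^2 - |\nabla|\nabla\rho||^2$ only when it helps; more precisely, since $\mathrm{Ric}\ge 0$ and one can bound $|\nabla|\nabla\rho||^2 \le |\nabla^2\rho|^2$ pointwise (Kato/Newton inequality), the integrand over $\ell_1(s)$ is nonnegative, but to get $S$ out I again use Lemma \ref{RicS}: the term equals $\frac12 S - \frac12\bar S + (\text{nonneg})$, and $\int_{\ell_1(s)}\bar S \le 0$ would require controlling the topology of the fingers. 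Here is where the upper bound $S\le K$ enters — rather than integrating $\bar S$, I bound $\int_{\ell_1(s)} \mathrm{Ric}(\nabla\rho,\nabla\rho)|\nabla\rho|^{-2}$ crudely and absorb $\frac12\int_{\ell_1(s)}S \le \frac{K}{2}\mathrm{A}(\ell_1(s)) \le \frac{K}{2}\int_{\ell_1(s)}|\nabla\rho|\cdot(1+C/s)$, then integrate in $s$ and invoke Lemma \ref{l1} to get $CK\omega(R)$. Combining, $\int_{t_0}^R\frac12\int_{\ell_0(s)}S\,ds \le \int_{\ell(R)}|\nabla\rho|_\nu + \beta + CK\omega(R)$, where $\beta$ from \eqref{Ct0} is bounded by $C\mathrm{V}(M\setminus M_{t_0})$ via \eqref{Ct0 bound}, and the outer term $\int_{\ell(R)}|\nabla\rho|_\nu$ needs handling.

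To finish, I would relate $\int_{t_0}^R\int_{\ell_0(s)}S\,ds$ to $\int_{B_p(R)}S$. Using $|\nabla\rho| = 1 + O(1/r)$ and the co-area formula, $\int_{\{t_0<\rho<R\}}S = \int_{t_0}^R\int_{\ell(s)}S|\nabla\rho|^{-1}\,ds$, and since $|\nabla\rho|^{-1} = 1 + O(1/r)$, the discrepancy between $\int S|\nabla\rho|^{-1}$ and $\int S$ on each level set is $O(K\,\mathrm{A}(\{\rho=s\})/s)$, again summable to $CK\omega(R)$ by Lemma \ref{Area}. The region $\{t_0<\rho<R\}$ differs from $B_p(R)$ by sets near $B_p(t_0)$ and near $\partial B_p(R)$; the first contributes $CK\mathrm{V}(D_{r_0}) \le CK\mathrm{V}(M\setminus M_{t_0})$ (via $\rho \le r + 6\ln r$, so $\{\rho<R\}\supset B_p(R - 6\ln R)$ roughly), and the annular discrepancy $\mathrm{V}(B_p(R)\setminus\{\rho<R\})$ is again $\le C\mathrm{V}(B_p(R))\ln R/R$, absorbable. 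The crucial remaining point is the outer boundary term: I claim $\int_{\ell(R)}|\nabla\rho|_\nu \le 8\pi R + (\text{error})$ is \emph{not} what one does directly — instead one should run the Bochner integration only on $M_{t_0}$ up to level $R$ and observe that $\int_{\ell(R)}|\nabla\rho|_\nu = \int_{\ell_0(R)}|\nabla\rho|_\nu + \int_{\ell_1(R)}|\nabla\rho|_\nu$, where the latter is absorbed into the $\ell_1$ analysis and the former is bounded using the equation $\Delta\rho^{-a}\ge 0$ integrated over $M_R$ (the unbounded component), giving $\frac{1}{R^{a+1}}\int_{\ell_0(R)}|\nabla\rho| \ge 0$ — i.e. this term has a favorable sign and one keeps $\int_{\ell_0(R)}|\nabla\rho|$ on the \emph{left}. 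Reorganizing: the genuine main term $8\pi R$ arises as $\int_{t_0}^R 8\pi\,ds$ from the Gauss–Bonnet $4\pi$ contributions on $\ell_0(s)$, combined with the factor $\frac12$ and the relation $\frac12\int S \leftrightarrow \text{area-type terms}$; tracking the constant to be exactly $8\pi$ (not merely $O(1)\cdot R$) is the delicate bookkeeping step. I expect the main obstacle to be precisely this constant-tracking: ensuring that after all the splittings, substitutions $r = s - C\ln s$, and co-area manipulations, no extra multiple of $R$ leaks in beyond the $8\pi$ coming from Gauss–Bonnet, while every genuinely lower-order term (those scaling like $\omega(R)$ or $\mathrm{V}(M\setminus M_{t_0})$) is correctly corralled into the error.
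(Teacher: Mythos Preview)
Your overall architecture matches the paper's, but there is a genuine gap at exactly the point you flag as ``the crucial remaining point.'' Two terms in the integrated Bochner identity are not lower-order and cannot be disposed of as you propose.

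First, the term $\int_{L(t_0,R)}\langle\nabla\rho,\nabla\Delta\rho\rangle|\nabla\rho|^{-1} = 2\int_{L(t_0,R)}\langle\nabla|\nabla\rho|,\nabla\rho\rangle$ is \emph{not} controlled by the gradient estimate $|\nabla\rho|^2-1\le 10/r$: that estimate bounds $|\nabla\rho|^2-1$, not $\nabla|\nabla\rho|$. If you integrate by parts you pick up $\int_{\ell(R)}|\nabla\rho|^2$, which is an area term of uncontrolled size, plus a volume term $\int|\nabla\rho|\Delta\rho$ which has no good sign since $\Delta\rho=|\nabla\rho|^2-1$ can be close to $-1$ on a large set (there is no lower bound on $|\nabla\rho|$). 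Second, your treatment of $\int_{\ell_0(R)}|\nabla\rho|_\nu$ is incorrect: integrating $\Delta\rho^{-a}\ge 0$ over $M_R$ gives information about $\int_{\ell_0(R)}|\nabla\rho|$ (the flux of $\rho$), not about $\int_{\ell_0(R)}|\nabla\rho|_\nu$ (the normal derivative of $|\nabla\rho|$), which is a different quantity with no obvious sign.

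The paper's resolution is to recognize that these two problematic terms are linked: setting $G(t):=\int_{L(t_0,t)}\langle\nabla|\nabla\rho|,\nabla\rho\rangle$, the outer boundary term is exactly $G'(t)$ and the ``lower-order'' volume term is $2G(t)$, so the integrated Bochner identity becomes the differential inequality $G'(t)\ge 2G(t)+H(t)$ with $H(t)=\tfrac12\int_{L(t_0,t)}S|\nabla\rho|^2 - 4\pi(t-t_0) - CK\omega(t) + \text{(inner bdry)}$. Rewriting as $(e^{-2t}G(t))'\ge e^{-2t}H(t)$ and integrating from $R$ to $2R$ makes the uncontrolled endpoint $G(2R)$ appear with weight $e^{-4R}$ against $G(R)$ with weight $e^{-2R}$; since one can show $G(2R)\le CR^3+C\beta$ and $G(R)\ge -C\omega(R)-\beta$ by the crude estimates you sketch, the exponential gap kills the bad terms and leaves $\tfrac12\int_{L(t_0,R)}S|\nabla\rho|^2 \le 4\pi R + CK\omega(R)+C\beta$. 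This ODE-with-exponential-weight step is the missing idea; without it your argument does not close.
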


\begin{proof}
Integrating the Bochner formula 
\begin{equation*}
\Delta \left\vert \nabla \rho \right\vert =\left( \left\vert \nabla ^{2}\rho
\right\vert ^{2}-\left\vert \nabla \left\vert \nabla \rho \right\vert
\right\vert ^{2}+\mathrm{Ric}\left( \nabla \rho ,\nabla \rho \right) \right)
\left\vert \nabla \rho \right\vert ^{-1}+\left\langle \nabla \left( \Delta
\rho \right) ,\nabla \rho \right\rangle \left\vert \nabla \rho \right\vert
^{-1}
\end{equation*}%
on $L(t_0,t)$ we obtain that 
\begin{eqnarray}
\int_{\ell(t)}\left\vert \nabla \rho \right\vert _{\nu
}-\int_{\ell(t_0) }\left\vert \nabla \rho \right\vert _{\nu }
&=&\int_{L(t_0,t)}\Phi \left\vert \nabla \rho \right\vert 
\label{z1} \\
&+&\int_{L(t_0,t)}\left\langle \nabla \left( \Delta \rho
\right) ,\nabla \rho \right\rangle \left\vert \nabla \rho \right\vert ^{-1},
\notag
\end{eqnarray}%
where $\nu =\frac{\nabla \rho }{\left\vert \nabla \rho \right\vert }$ and 
\begin{equation*}
\Phi =\left( \left\vert \nabla ^{2}\rho \right\vert ^{2}-\left\vert \nabla
\left\vert \nabla \rho \right\vert \right\vert ^{2}+\mathrm{Ric}\left(
\nabla \rho ,\nabla \rho \right) \right) \left\vert \nabla \rho \right\vert
^{-2}.
\end{equation*}
Since $\Delta \rho=|\nabla \rho|^2-1$, it follows that 
$$
\int_{L(t_0,t)}\left\langle \nabla \left( \Delta \rho
\right) ,\nabla \rho \right\rangle \left\vert \nabla \rho \right\vert ^{-1}
=2\int_{ L(t_0,t) }\left<\nabla \vert \nabla \rho\vert, \nabla \rho\right>.
$$
By the co-area formula, we have that 
\begin{equation}
\int_{L(t_0,t)}\Phi \left\vert \nabla \rho \right\vert
=\int_{t_0}^{t}\left( \int_{\ell_{0}\left( s\right) }\Phi \right)
ds+\int_{t_0}^{t}\left( \int_{\ell_{1}\left( s\right) }\Phi \right) ds.  \label{coarea}
\end{equation}%
According to Lemma \ref{Ric}, 
\begin{equation*}
\int_{t_0}^{t}\left( \int_{\ell_{0}\left( s\right) }\Phi \right) ds\geq \frac{1}{2%
}\int_{t_0}^{t}\left( \int_{\ell_{0}\left( s\right) }S\right) ds-4\pi (t-t_0).
\end{equation*}%
By Proposition \ref{rho} and Lemma \ref{Area} we have that 
\begin{eqnarray*}
\int_{t_0}^{t}\left( \int_{\ell_{0}\left( s\right) }S\right) ds &\geq
&\int_{t_0}^{t}\left( 1-\frac{10}{s}\right) \left( \int_{\ell_{0}\left( s\right)
}S\left\vert \nabla \rho \right\vert \right) ds \\
&\geq &\int_{t_0}^{t}\left( \int_{\ell_{0}\left( s\right) }S\left\vert \nabla
\rho \right\vert \right) ds-CK\omega(t)
\end{eqnarray*}
since by (\ref{omega2t}),
$$
\int_{L(t_0,t)}\frac{1}{\rho}\leq C\int_{B_p(t+C\ln t)}\frac{1}{r}\leq C\omega(t).
$$
In conclusion,
\begin{eqnarray}
\;\;\;\;\;\int_{t_0}^{t}\left( \int_{\ell_{0}\left( s\right) }\Phi \right) ds &\geq &\frac{1%
}{2}\int_{t_0}^{t}\left( \int_{\ell_{0}\left( s\right) }S\left\vert \nabla \rho
\right\vert \right) ds-4\pi (t -t_0) \label{z100}-CK\omega(t)  
\end{eqnarray}%
for any $t<2R.$
 
Lemma \ref{l1} implies that 
\begin{equation*}
\int_{t_0}^{t}\left( \int_{\ell_{1}\left( s\right) }S\left\vert \nabla \rho
\right\vert \right) ds\leq CK\omega(t)
\end{equation*}%
for all $t<2R.$  Therefore, by co-area formula, 
\begin{equation*}
  \int_{t_0}^{t}\left( \int_{\ell_{0}\left( s\right) }S\left\vert \nabla \rho
\right\vert \right) ds\geq \int_{L(t_0,t)}S\left\vert
\nabla \rho \right\vert ^{2}-CK\omega(t).
\end{equation*}
 Plugging this in (\ref{z100}) implies that 
\begin{eqnarray}\label{z2}
\int_{t_0}^{t}\left( \int_{\ell_{0}\left( s\right) }\Phi \right) ds &\geq &\frac{1%
}{2}\int_{L(t_0,t) }S\left\vert \nabla \rho \right\vert
^{2}-4\pi (t-t_0)   -CK\omega(t).  
\end{eqnarray}

Since  $\mathrm{Ric}\geq 0,$  we have that
\begin{equation*}
\int_{t_0}^{t}\left( \int_{\ell_{1}\left( s\right) }\Phi \right) ds\geq 0.
\end{equation*}
Together with (\ref{coarea}) and (\ref{z2}), we conclude that 
\begin{eqnarray*}
\int_{L(t_0,t)}\Phi \left\vert \nabla \rho \right\vert 
&\geq &\frac{1}{2}\int_{L(t_0,t)}S\left\vert \nabla \rho
\right\vert ^{2}-4\pi (t-t_0)  -CK\omega(t).  
\end{eqnarray*}
Plugging this into (\ref{z1}), we see that for all $t<2R,$
\begin{eqnarray}\label{z4} 
\;\;\;\int_{\ell(t)}\left\vert \nabla \rho \right\vert _{\nu
}-\int_{\ell(t_0)}\left\vert \nabla \rho \right\vert _{\nu }
&\geq &\frac{1}{2}\int_{L(t_0,t)}S\left\vert \nabla \rho
\right\vert ^{2}-4\pi (t-t_0)  -CK\omega(t)\\
&+&2\int_{L(t_0,t)}\left\langle \nabla \left\vert \nabla
\rho \right\vert ,\nabla \rho \right\rangle.  \notag
\end{eqnarray}

Denote with 
\begin{eqnarray}
G\left( t\right)  &:=&\int_{L(t_0,t)}\left\langle \nabla
\left\vert \nabla \rho \right\vert ,\nabla \rho \right\rangle ,  \label{z6}
\\
H\left( t\right)  &:=&\frac{1}{2}\int_{L(t_0,t)}S\left\vert
\nabla \rho \right\vert ^{2}-4\pi (t-t_0)-CK\omega(t) +\int_{\ell(t_0) }\left\vert \nabla \rho \right\vert _{\nu }.  \notag
\end{eqnarray}%
By the co-area formula, the derivative of $G$ with respect to $t$ satisfies
\begin{equation*}
G^{\prime }\left( t\right) =\int_{\ell(t)}\left\langle
\nabla \left\vert \nabla \rho \right\vert ,\nabla \rho \right\rangle
\left\vert \nabla \rho \right\vert ^{-1}=\int_{\ell(t)}\left\vert \nabla \rho \right\vert _{\nu }.
\end{equation*}%
Therefore, (\ref{z4}) can be rewritten into
\begin{equation*}
G^{\prime }\left( t\right) \geq 2G\left( t\right) +H\left( t\right).
\end{equation*}%
Equivalently,
\begin{equation}
\left( e^{-2t}G\left( t\right) \right) ^{\prime }\geq e^{-2t}H\left(
t\right)   \label{z7}
\end{equation}%
for all $t_0<t<2R.$

Integrating (\ref{z7}) in $t$ from $R$ to $2R$ implies that 
\begin{eqnarray}\label{GH}
e^{-4R}G\left(2R\right) -e^{-2R}G\left( R\right) 
&\geq &\int_{R}^{2R}e^{-2t}H\left( t\right) dt. 
\end{eqnarray}%
For any $t\in [R,2R]$ we have   $$\int_{L(t_0,t)} S\vert \nabla \rho\vert^2 \geq \int_{L(t_0,R)}S\vert \nabla \rho\vert^2  \;\;\;\text{and}\;\;\; \omega (t)\leq C\omega (R).$$ 
Therefore 
$$H(t)\geq \frac{1}{2}\int_{L(t_0,R)}S\vert \nabla \rho\vert^2  -CK\omega(R)-\beta -4\pi (t-t_0),$$ for $\beta$ defined in (\ref{Ct0}). By (\ref{Ct0 bound}), it follows that 
\begin{eqnarray*}
\int_{R}^{2R}e^{-2t}H\left( t\right) dt&\geq& \frac{1}{4}\left(e^{-2R}-e^{-4R}\right) \int_{L(t_0,R)}S\vert \nabla \rho\vert^2\\
&-&(2\pi R- R_0)e^{-2R}-C(K\omega(R)+\beta)e^{-2R}.
\end{eqnarray*}

Consequently, (\ref{GH}) implies that 
 \begin{eqnarray}\label{GH10}
\;\;\;\; \frac{1}{4}\left(e^{-2R}-e^{-4R}\right) \int_{L(t_0,R)}S\vert \nabla \rho\vert^2&\leq& (2\pi R-R_0)e^{-2R}\\
&+&C(K\omega(R)+\beta )e^{-2R}\notag \\
&+&
e^{-4R}G\left(2R\right) -e^{-2R}G\left( R\right).\notag
\end{eqnarray}

On the other hand, integrating by parts in (\ref{z6}) gives
 
\begin{equation}\label{GH2}
G\left( t\right)  =-\int_{L(t_0,t)}\left\vert \nabla
\rho \right\vert \Delta \rho +\int_{\ell(t)}\left\vert
\nabla \rho \right\vert ^{2}-\int_{\ell(t_0)}\left\vert
\nabla \rho \right\vert ^{2}. 
\end{equation}
By Proposition \ref{rho}, the first term is estimated as 
$$
\left\vert\int_{L(t_0,t)}\left\vert \nabla
\rho \right\vert \Delta \rho\right\vert \leq C\int_{L(t_0,t)}\left\vert \nabla
\rho \right\vert  \leq Ct^3,
$$
where the volume comparison theorem is used. Thus,
$$
G(2R)\leq \int_{\ell(2R)}\left\vert
\nabla \rho \right\vert ^{2}+CR^3.
$$
Applying Proposition \ref{rho} again, we have
 
\begin{eqnarray*}
\int_{\ell(2R)}\left\vert \nabla \rho \right\vert ^{2}&\leq& C\int_{\ell(2R)}\left\vert
\nabla \rho \right\vert \\
&=&C\int_{L(t_0,2R)}\Delta \rho +C\int_{\ell(t_0)}\left\vert
\nabla \rho \right\vert \\
&\leq &CR^3+C\beta.
\end{eqnarray*}
This proves that 
\begin{equation}\label{GH3}
G(2R)\leq CR^3+C\beta.
\end{equation}
 We now obtain a lower bound for $G$. We have 
\begin{eqnarray*}
G\left( R\right) &=& \int_{L(t_0,t)}\left\vert \nabla
\rho \right\vert \left(1-|\nabla \rho|^2\right) +\int_{\ell(t)}\left\vert
\nabla \rho \right\vert ^{2}-\int_{\ell(t_0)}\left\vert
\nabla \rho \right\vert ^{2}.
\\ &\geq &\int_{L(t_0,R)}\left\vert \nabla
\rho \right\vert \left( 1-\left\vert \nabla \rho \right\vert ^{2}\right) -\beta\\
&\geq&-C\int_{L(t_0,R)}\frac{1}{\rho}-\beta\\
&\geq &-C\omega(R)-\beta. 
\end{eqnarray*}
We conclude that 
$$
e^{-4R}G\left(2R\right) -e^{-2R}G\left( R\right) \leq C\left(\omega(R)+\beta\right)e^{-2R}+CR^3 e^{-4R}.
$$
Plugging this into (\ref{GH10})  implies that  
\begin{equation}
\int_{L(t_0,R)}S\left\vert \nabla \rho
\right\vert ^{2}\leq 8\pi R+CK\omega(R)+C\beta.  \label{z9}
\end{equation}
Since $S\leq K$ on $B_p\left(2R\right) $ and $\left\vert \nabla \rho
\right\vert ^{2}\leq 1+\frac{C}{\rho}$, it follows that 
\begin{eqnarray*}
0 &\leq &\left( S-K\right) \left( \left\vert \nabla \rho \right\vert ^{2}-1-%
\frac{C}{\rho}\right)  \\
&\leq &S\left\vert \nabla \rho \right\vert ^{2}-S+K-K\left\vert \nabla \rho
\right\vert ^{2}+\frac{CK}{\rho} \\
&=&S\left\vert \nabla \rho \right\vert ^{2}-S-K\Delta \rho +\frac{CK}{\rho}.
\end{eqnarray*}%
Hence,
 
\begin{equation*}
\int_{L(t_0,R)}S\left\vert \nabla \rho \right\vert
^{2}\geq \int_{ L(t_0,R) }S+K\int_{L(t_0,R)}\Delta \rho -CK\int_{L(t_0,R) }\frac{1}{\rho }.
\end{equation*}%
Note that%
\begin{equation*}
\int_{ L(t_0,R) }\Delta \rho =\int_{ \ell(R)}\left\vert \nabla \rho \right\vert -
\int_{ \ell(t_0)}\left\vert \nabla \rho \right\vert \geq -\beta
\end{equation*}
and that 
$$
\int_{L(t_0,R) }\frac{1}{\rho }\leq C\omega(R).
$$
Therefore, 
\begin{equation}
\int_{ L(t_0,R)  }S\left\vert \nabla \rho \right\vert
^{2}\geq \int_{ L(t_0,R)  }S-CK\omega(R)-CK\beta.  \label{z10}
\end{equation}%
From (\ref{z9}) and (\ref{z10}) it follows that 
\begin{equation*}
\int_{L(t_0,R)}S\leq 8\pi R+CK\omega(R)+CK\beta.
\end{equation*} 
However, $B_p (R)\setminus L(t_0,R)=M\setminus M_{t_0}$, therefore 
\begin{equation*}
\int_{B_p (R)}S\leq 8\pi R+CK\omega(R)+CK\mathrm{V}(M\setminus M_{t_0}).
\end{equation*} 
In view of Proposition \ref{rho}, the theorem follows.
\end{proof}

\end{document}